\theoremstyle{plain}
\newtheorem{theorem}{Theorem}[section]
\newtheorem{conjecture}[theorem]{Conjecture}
\newtheorem{lemma}[theorem]{Lemma}
\newtheorem{proposition}[theorem]{Proposition}
\theoremstyle{definition}
\theoremstyle{remark}
\newtheorem*{remark}{Remark}
\numberwithin{equation}{section}
\newcommand{\N}{\mathbb N}
\newcommand{\Z}{\mathbb Z}
\newcommand{\C}{\mathbb C}
\newcommand{\sgn}{\operatorname{sgn}}
\newcommand{\CT}{\operatorname{CT}}
\newcommand{\vast}{\bBigg@{4}}
\newcommand{\Vast}{\bBigg@{5}}
\renewcommand{\b}[1]{\boldsymbol{#1}}
\newcommand{\bop}[1]{\b{\operatorname{#1}}}
\newlength{\parenheight}
\newlength{\parendepth}
\newlength{\parendrop}
\newcommand{\paren}[4]{%
	\settoheight{\parenheight}{\(#4 #2\)}%
	\settodepth{\parendepth}{\(#4 #2\)}
	\addtolength{\parendepth}{.5ex}
	\addtolength{\parenheight}{-.5ex}
	\addtolength{\parenheight}{\parendepth}
	\addtolength{\parendepth}{-.5\parenheight}
	\setlength{\parendrop}{-.5\parenheight}
	\addtolength{\parendrop}{.5ex}
	\raisebox{-\parendepth}{\(#4
		\left#1%
		\rule[\parendrop]{0pt}{\parenheight}%
		\right.\)}
	#2
	\raisebox{-\parendepth}{\(#4
		\left.%
		\rule[\parendrop]{0pt}{\parenheight}%
		\right#3\)}
}
\def\myleft#1#2\myright#3{%
	\mathchoice{%
		\paren{#1}{#2}{#3}{\displaystyle}%
	}{%
	\paren{#1}{#2}{#3}{\textstyle}%
}{%
\paren{#1}{#2}{#3}{\scriptstyle}%
}{%
\paren{#1}{#2}{#3}{\scriptscriptstyle}%
}%
}
\renewenvironment{proof}[1][Proof]{\begin{trivlist} \item[\hskip \labelsep {\bfseries #1:}]}{\qed\end{trivlist}}
\begin{document}
%\magstep2
%Topmatter

%One author
\title[identities and conjectures for false theta functions and characters]
{Further $q$-series identities and conjectures relating false theta functions and characters }
\author[C. Jennings-Shaffer]{Chris Jennings-Shaffer}
\address{Department of Mathematics, University of Denver, 2390 S. York St. Denver, CO 80208}
%\curraddr{}
\email{christopher.jennings-shaffer@du.edu}

\author[A. Milas]{Antun Milas}
\address{Department of Mathematics and Statistics,
   SUNY-Albany,
   Albany, NY 12222, USA}
%\curraddr{}
\email{amilas@albany.edu}
\thanks{A. Milas acknowledges the support from the NSF grant DMS 1601070}

\subjclass[2020]{11P84,17B69}

\begin{abstract} 
In this short note, a companion of \cite{JM}, we discuss several families of $q$-series identities in connection to 
false and mock theta functions, characters of modules of vertex algebras, and ``sum of tails''.
\end{abstract}

\maketitle

\section{Introduction and previous work}

%For additional aspects of partial and theta functions we refer the reader to an excellent review article by Warnaar \cite{Warnaar2}.
In our previous work \cite{JM}, motivated by character formulas of vertex algebras and superconformal indices in physics, we obtained various identities 
for false theta functions including the following elegant identity.
\begin{theorem} \label{euler-false-p} For $k \geq 1$,
\begin{equation} \label{odd}
\frac{\sum_{n \in \mathbb{Z}} {\rm sgn}(n) q^{(k+1)n^2+kn}}{(q)^{2k}_\infty}
=
\sum_{n_1,n_2,\dotsc,n_{2k-1} \geq 0} \frac{q^{\sum_{i=1}^{2k-2} n_i n_{i+1}+\sum_{i=1}^{2k-1} n_i}}{(q)_{n_1}^2 (q)_{n_2}^2 \cdots (q)_{n_{2k-1}}^2},
\end{equation}
where as usual $(a)_n=\prod_{i=0}^{n-1} (1-aq^{i})$.
\end{theorem}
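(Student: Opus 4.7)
The plan is to prove \eqref{odd} by iterating Bailey's lemma in a form that preserves denominators of shape $(q)_n^2$. The right-hand side, a $(2k-1)$-fold multisum with nearest-neighbor coupling $q^{n_i n_{i+1}}$, a uniform linear shift $q^{n_i}$, and denominators $(q)_{n_i}^2$, is exactly the shape produced by $2k-1$ applications of the limiting Bailey transform starting from a suitable unit-type Bailey pair. Dually, the left-hand side is a false theta function divided by $(q)_\infty^{2k}$, and the prefactor $1/(q)_\infty^{2k}$ accounts for the $2k$ powers of $(q)_\infty^{-1}$ collected along the chain (one per iteration, plus the one produced by the final Bailey-lemma step).

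Concretely, I would start from a seed Bailey pair $(\alpha_n,\beta_n)$ relative to $a=1$ with $\beta_n = 1/(q)_n^2$ and apply the Bailey transformation $2k-1$ times, in each step sending the free parameters to infinity so that only $(q)_n^2$-type denominators survive. A direct bookkeeping argument should then show that after $m$ iterations the accumulated exponent is $\sum_{i=1}^{m-1} n_i n_{i+1}+\sum_{i=1}^{m} n_i$ and the dressed $\alpha$-side acquires a factor of the form $q^{(m+1)n^2+mn}$, consistent with the exponents on both sides of \eqref{odd} at $m=2k-1$. Terminating at $m=2k-1$ produces the stated multisum on the right, and summing the dressed $\alpha$-side yields the false theta $\sum_n \sgn(n)q^{(k+1)n^2+kn}$ divided by $(q)_\infty^{2k}$, as required.

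The main obstacle is that standard Bailey iteration typically outputs honest (two-sided) Jacobi theta functions, whereas the target here is a \emph{false} theta function in which the $\sgn$ factor breaks the usual Jacobi triple-product symmetry. One must therefore select a seed pair engineered so that the sign function propagates correctly through the iteration; Bailey pairs of this type appear in work of Lovejoy and of Bringmann--Jennings-Shaffer--Mahlburg and are known to produce partial/false theta functions on the $\alpha$-side. Verifying that this is the correct seed reduces to checking the base case $k=1$, where \eqref{odd} specializes to a Rogers--Ramanujan-type identity for $\sum_{n\geq 0} q^n/(q)_n^2$ that is accessible by a single step of the Bailey machinery combined with the Durfee-square decomposition $1/(q)_\infty=\sum_n q^{n^2}/(q)_n^2$.

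As a cross-check (and potentially as an alternative proof), one can proceed by induction on $k$: summing out $n_{2k-1}$ and $n_{2k-2}$ on the right should, after a standard $q$-hypergeometric transformation, reduce \eqref{odd} for $k+1$ to \eqref{odd} for $k$; on the left the corresponding statement is a quadratic recursion relating $\sum_n \sgn(n)q^{(k+2)n^2+(k+1)n}$ to $\sum_n \sgn(n)q^{(k+1)n^2+kn}$, which one can derive directly by splitting the summation modulo $2k+2$ and applying Jacobi-style manipulations adapted to the sign-weighted setting. This induction, together with the base case, would then close the argument begun from the Bailey chain.
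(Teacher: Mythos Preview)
Your proposal has a real structural gap. You assert that the right-hand side of \eqref{odd}, with denominators $(q)_{n_i}^2$ and coupling only through the exponent $q^{n_i n_{i+1}}$, ``is exactly the shape produced by $2k-1$ applications of the limiting Bailey transform.'' It is not. The limiting Bailey lemma (with both parameters sent to infinity, relative to $a=1$ or $a=q$) produces new $\beta$'s of the form $\beta'_N=\sum_{j}q^{j^2}\beta_j/(q)_{N-j}$, so iterated application yields denominators of Andrews--Gordon type $(q)_{m_1}(q)_{m_1-m_2}\cdots(q)_{m_{k-2}-m_{k-1}}$, not $(q)_{n_1}^2(q)_{n_2}^2\cdots(q)_{n_{2k-1}}^2$. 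There is no choice of Bailey-lemma parameters that turns a single iteration into $\beta'_N=\sum_j q^{j+jN}\beta_j/(q)_j^2$, which is what your scheme implicitly requires. Consequently the ``bookkeeping argument'' you outline cannot be carried out as stated, and the discussion of exotic sign-carrying seed pairs is beside the point.

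The paper's approach (carried out in \cite{JM} and mirrored in Section~4 here) supplies precisely the missing step. One first represents the $(q)_{n_i}^2$-multisum as a constant term in non-commuting variables satisfying $\zeta_j\zeta_{j+1}=q\zeta_{j+1}\zeta_j$ (Proposition~\ref{PropExtraIdentityStarter}), then applies the pentagon identity \eqref{pent2} for the quantum dilogarithm together with \eqref{EqInverseJacobi} to convert the constant term into a sum with the genuine Bailey-chain denominators $(q)_{m_1}(q)_{m_1-m_2}\cdots$ (Lemma~\ref{LemmaMultiSum2}). Only \emph{then} is the iterated Bailey lemma \eqref{EqBaileyChainSpecialized1} applied, with the ordinary Slater pair $B(3)$ of \eqref{EqDefBaileyPairB3}; the false theta arises from the factor $(1-q^{2n+1})$ in $\alpha^{B3}_n$, not from any special sign-weighted seed. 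Your inductive cross-check is also underspecified: summing out $n_{2k-1}$ alone gives $\sum_{m\ge0}q^{m(n_{2k-2}+1)}/(q)_m^2$, which does not collapse to anything of the same shape, and the claimed two-step recursion relating the false thetas at levels $k$ and $k+1$ does not hold as stated.
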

We note that these identities have an odd number of summation variables. 
Interestingly, with an even number of summation variables we obtained a family of  modular
identities conjectured in \cite{CS}. 
\begin{theorem} \label{euler-modular-p}
For $k \geq 1$, 
\begin{equation} \label{even}
\frac{(q,q^{2k+2},q^{2k+3};q^{2k+3})_\infty}{(q)^{2k+1}_\infty}
=\sum_{n_1,n_2,\dotsc,n_{2k} \geq 0} \frac{q^{\sum_{i=1}^{2k-1} n_i n_{i+1}+\sum_{i=1}^{2k} n_i}}{(q)_{n_1}^2 (q)_{n_2}^2 \cdots (q)_{n_{2k}}^2}.
\end{equation}
\end{theorem}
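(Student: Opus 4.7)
The plan is to prove Theorem~\ref{euler-modular-p} by the same Bailey-pair methodology that established Theorem~\ref{euler-false-p} in \cite{JM}, run for one additional iteration. The right-hand side of \eqref{even}, with its $2k$ summation indices and denominator $\prod_i (q)_{n_i}^2$, has precisely the shape produced by $2k$ applications of the Bailey lemma at base $a=1$ (in the $\rho_1,\rho_2 \to \infty$ limit) starting from the seed Bailey pair used in \cite{JM} for \eqref{odd}.

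First, I would take the same seed Bailey pair that after $2k-1$ iterations produces the inner sum of \eqref{odd}; one further Bailey step introduces the new variable $n_{2k}$ with exponent contribution $n_{2k-1}n_{2k}+n_{2k}$ and denominator $(q)_{n_{2k}}^2$, matching the inner sum on the right-hand side of \eqref{even}. Applying the same extra step to the companion $\alpha$-side, I expect the one-dimensional sum $\sum_{n\in\Z}\sgn(n) q^{(k+1)n^2+kn}$ appearing in \eqref{odd} to be replaced by a genuine bilateral theta of the form $\sum_{n\in\Z}(-1)^n q^{((2k+3)n^2-(2k+1)n)/2}$, which by the Jacobi triple product identity equals $(q,q^{2k+2},q^{2k+3};q^{2k+3})_\infty$. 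Combined with the factor of $(q)_\infty^{-(2k+1)}$ that accumulates from the normalization of the $2k$-step Bailey chain, this recovers the left-hand side of \eqref{even}.

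The main obstacle is the bookkeeping: one must verify simultaneously that (i) the accumulated quadratic form in the $\alpha$-side matches the Andrews--Gordon theta for modulus $2k+3$ at index $i=1$, and (ii) the extra Bailey step genuinely converts the $\sgn(n)$-weighted false theta of \eqref{odd} into a $(-1)^n$-weighted true theta. In effect, the parity of the number of Bailey iterations must explain the dichotomy between false theta (odd case, \eqref{odd}) and modular product (even case, \eqref{even}). If a single direct iteration does not produce this upgrade cleanly, a fallback is to induct on $k$ using \eqref{odd} at level $k$ and \eqref{even} at level $k-1$ as the base inputs, combined with a $q$-difference or recursion argument in the style used in \cite{JM}; but I expect the direct Bailey-chain proof to succeed once the correct seed pair and closing Bailey move are pinned down.
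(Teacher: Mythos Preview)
Your plan rests on the claim that the right-hand side of \eqref{even} ``has precisely the shape produced by $2k$ applications of the Bailey lemma at base $a=1$''. It does not. The standard Bailey chain (with $\rho_1,\rho_2\to\infty$) produces sums with exponent $\sum n_i^2$ and denominators $(q)_{n_i-n_{i+1}}$, not exponent $\sum n_i n_{i+1}+\sum n_i$ and denominators $(q)_{n_i}^2$. So there is no ``seed Bailey pair'' whose iterates are literally the inner multisum, and no ``one further Bailey step'' sending \eqref{odd} to \eqref{even}. This is the genuine gap: you have assumed a structural match that is not there, and without it the rest of the argument (the parity dichotomy, the Jacobi triple product identification) never gets off the ground.

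The route in \cite{JM}, which the present paper reproduces verbatim for the half-characteristic analog (Proposition~\ref{PropExtraIdentityStarter}, Lemma~\ref{LemmaMultiSum2}, Theorem~\ref{main-4}), supplies exactly the missing bridge. One first writes the multisum as the constant term, in noncommuting variables $\zeta_j$ with $\zeta_j\zeta_{j+1}=q\zeta_{j+1}\zeta_j$, of a product of quantum dilogarithms $\phi(q^{1/2}\zeta_j^{\pm1})$. The pentagon identity \eqref{pent2} then reorganizes this product; re-expanding and extracting the constant term yields a \emph{different} multisum, now of the shape
\[
\frac{1}{(q)_\infty^k}\sum_{m_1\ge m_2\ge\cdots\ge m_{k-1}\ge0}
\frac{(-1)^{m_2+\cdots+m_{k-1}}q^{\sum m_j(m_j+1)/2}\cdots}
{(q)_{m_1}(q)_{m_1-m_2}\cdots(q)_{m_{k-2}-m_{k-1}}},
\]
which \emph{is} the left side of the iterated Bailey lemma \eqref{EqBaileyChainSpecialized1} relative to $a=q$ with the pair $B(3)$. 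The false/modular dichotomy between \eqref{odd} and \eqref{even} then arises not from the number of Bailey iterations but from the parity of $k$ in the $\alpha$-side sum $\sum_{n\ge0}(-1)^{(k+1)n}q^{(k+2)n(n+1)/2-n}(1-q^{2n+1})$: for $k$ even the $n\mapsto -n-1$ symmetry folds it into a bilateral theta (hence the triple product), for $k$ odd into a sign-weighted false theta. Your proposal has the endpoint right but skips the pentagon/constant-term step that makes Bailey applicable at all.
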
 
In a somewhat different direction, in the same paper, we also examined $q$-series identities for false theta functions with half-integral characteristics (here $k \in \mathbb{N}$ and $\epsilon \in \{0, \frac{1}{2} \}$)
\begin{equation} \label{half}
\frac{(-q^{\frac12+\epsilon})_\infty}{(q)_\infty} \sum_{n \in \mathbb{Z}} {\rm sgn}(n) q^{\frac{1}{2}(2k+1)(n+a)^2},
\end{equation}
for some specific rational numbers $a$. We also considered related identities for certain ``shifted'' false theta series \cite[Section 3]{JM}. 

This paper aims to extend (\ref{odd}) and (\ref{even}) in a few directions. Firstly, we would like to study
related identities for the false theta functions as in (\ref{half}).  Secondly, we relax the condition on the poles 
in (\ref{odd}) and (\ref{even}) and perform a search 
for identities where the $q$-hypergeometric side takes the form   
\begin{equation} \label{master}
    \sum_{n_1,n_2,\dotsc,n_k\ge0}
\frac{q^{n_1+n_2+\dotsb+n_k+n_1n_2+n_2n_3+\dotsb+n_{k-1}n_k}}
{(q)_{n_1}^{r_1} (q)_{n_2}^{r_2} \dotsm (q)_{n_k}^{r_k} },
\end{equation}
with $k \leq \sum_{i=1}^k r_i \leq 2k$.
Lastly, we consider $q$-series identities coming from 
the formal inversion $q \mapsto q^{-1}$ of the $q$-hypergeometric term in (\ref{odd}) and (\ref{even}). This procedure is sometimes used for quantum modular forms to extend a $q$-series defined in the upper half-plane to the lower half-plane. 
 
Our paper is organized as follows. In  Sections 2 and 3 we gather several known facts.
In Section 4 we prove analogs of Theorems \ref{euler-false-p} and \ref{euler-modular-p} for false and classical theta series with half characteristics (Theorem \ref{main-4} and Proposition \ref{main-4a}). Section 5 is devoted to ``inverted identities'', under 
$q \mapsto q^{-1}$, associated to the $q$-hypergeometric series in (\ref{odd}) and (\ref{even}).  We argue that in both cases we expect modular identities. For the inverted $q$-series coming from (\ref{even}), this is proven in Proposition \ref{principal} by reduction to the character formula of a principal subspace of $A^{(1)}_{2k-1}$.
For (\ref{odd}), we expect (see Conjecture \ref{C-character}) that the resulting inverted series is modular as it is essentially the level one character of the affine vertex algebra $L_{sp(2k)}(\Lambda_0)$. We show that this is indeed true up to a cubic term (Proposition \ref{O3}). In Section 6, we study more complicated $q$-hypergeometric series of the form (\ref{master}) with $k=2$ and $k=3$. Continuing, in Section 7 we consider identities for the series (\ref{master}) with $r_i=1$ for all $i$. For $2 \leq k \leq 8$, except $k=7$, we found several interesting ``sums of tails'' type identities.  
Then in Section 8 we connect the $q$-series from Section 7  with characters of modules of principal subspaces
and infinite jet schemes. We end with a few remarks for future investigations.

%{\bf Acknowledgments.} 

%The second author acknowledges partial support from the NSF-DMS 1601070 grant and a stipend from 
%Max Planck Institute for Mathematics, Bonn.

\section{Quantum dilogarithm}

\subsection{Quantum dilogarithm}
As in \cite{JM}, we will approach several $q$-series identities using the quantum dilogarithm $\phi(x):=\prod_{i \geq 0} (1-q^i x)$.  Let $x$ and $y$ be non-commutative variables  such that $xy = qyx$,
then 
\begin{equation} \label{pent1}
\phi(y) \phi(x)=\phi(x) \phi(-yx) \phi(y)  , 
\end{equation}
which is Faddeev and Kashaev's pentagon identity for the quantum dilogarithm.
This identity implies that
\begin{equation}\label{pent2}
\frac{1}{\phi(x) \phi(y)}=\frac{1}{\phi(y) \phi(-yx) \phi(x)   },
\end{equation}
where $\frac{1}{\phi(x_1)\phi(x_2) \cdots \phi(x_n)}$ is understood to denote 
$\frac{1}{\phi(x_1)} \cdot \frac{1}{\phi(x_2)} \cdots \frac{1}{\phi(x_n)}$.
For its relevance in 4d/2d dualities in physics see \cite{CNV, CS} and references therein.

\section{Bailey's lemma and other known $q$-series identities}

As in \cite{JM}, we require Bailey's lemma and several standard $q$-series
identities, which we collect in this section.
A pair of sequences $(\alpha_n,\beta_n)$ is called a Bailey pair
relative to $a$ if
\begin{gather*}
\beta_n = \sum_{j=0}^n\frac{\alpha_j}{(q)_{n-j}(aq)_{n+j}}.
\end{gather*}
The $k$-fold iteration of Bailey's lemma can be found
in its entirety as Theorem 3.4 of \cite{Andrews1}. This theorem with
$k\mapsto k-1$, 
$a=q$,
$b_1=b_2=\dotsc=b_{k-1}\rightarrow\infty$,
$c_1=c_2=\dotsb=c_{k-2}=q$, $c_{k-1}=-w^{-1}q$,
$N\rightarrow\infty$, and $n_j\mapsto m_{k-j}$
states that
\begin{align}\label{EqBaileyChainSpecialized1}
&\sum_{m_1,m_2,\dotsc,m_{k-1}\ge0}
	\hspace{-1.3em}
	\frac{ (-w^{-1}q)_{m_1}(q)_{m_{k-1}} (-1)^{m_2+m_3+\dotsb+m_{k-1}}
		q^{\frac{m_1(m_1+1)}{2}+\frac{m_2(m_2+1)}{2}+\dotsb+\frac{m_{k-1}(m_{k-1}+1)}{2}}
		w^{m_1} \beta_{m_{k-1}}
	}
	{ (q)_{m_1} (q)_{m_1-m_2} \dotsm (q)_{m_{k-2}-m_{k-1}} }
\nonumber\\
&=
	\frac{(-wq)_\infty}{(q^2)_\infty}
	\sum_{n\ge0}
	\frac{(-w^{-1}q)_n (-1)^{kn} q^{\frac{(k-1)n(n+1)}{2}} w^n \alpha_n }{(-wq)_n}	
,
\end{align}
where $(\alpha_n,\beta_n)$ is any Bailey pair relative to $a=q$,
and $w\in\C$.
We require a single Bailey pair relative to $a=q$. Specifically this
is the Bailey pair $B(3)$ of Slater \cite{Slater1},
which is defined by
\begin{gather}\label{EqDefBaileyPairB3}
\alpha^{B3}_n := \frac{(-1)^n q^{\frac{n(3n+1)}{2} }(1-q^{2n+1})}{(1-q)}
,\qquad\qquad\qquad
\beta^{B3}_n := \frac{1}{(q)_n}.
\end{gather}

The additional $q$-series identity we require are as follows. We have two
identities of Euler \cite[(II.1) and (II.2)]{GasperRahman},
\begin{gather}
\label{EqEuler}
\sum_{n\ge0}\frac{z^n}{(q)_n} 
= 
	\frac{1}{(z)_\infty}
,\\
\label{EqEuler2}
\sum_{n\ge0}\frac{(-1)^nz^nq^{\frac{n(n-1)}{2}}}{(q)_n} 
= 
	(z)_\infty
.
\end{gather}
More generally, the $q$-binomial theorem
\cite[(II.3)]{GasperRahman} states that
\begin{gather}
\label{EqQBinomialTheorem}
\sum_{n\ge0}\frac{(a)_n z^n}{(q)_n} 
= 
	\frac{(az)_\infty}{(z)_\infty}
.
\end{gather}
We also need two forms of Heine's transformation \cite[(III.1) and (III.2)]{GasperRahman}, 
which are
\begin{align}
\label{EqHeinesTranformation1}
\sum_{n\ge0}\frac{(a,b)_nz^n}{(c,q)_n}
&=
	\frac{(b,az)_\infty}{(c,z)_\infty}
	\sum_{n\ge0}\frac{\left( \frac{c}{b}, z\right)_n b^n}{(az,q)_n}
,\\
\label{EqHeinesTranformation2}
\sum_{n\ge0}\frac{(a,b)_nz^n}{(c,q)_n}
&=
	\frac{\left(\frac{c}{b},bz\right)_\infty}{(c,z)_\infty}
	\sum_{n\ge0}\frac{\left( \frac{abz}{c}, b\right)_n \left(\frac{c}{b}\right)^n}{(bz,q)_n}
.
\end{align}
Lastly, we use Lemma 1 of \cite{A2} written as 
\begin{equation}\label{EqInverseJacobi}		
\frac{1}{\left(\zeta q^{\frac12},\zeta^{-1} q^{\frac12}\right)_\infty}
=
\frac{1}{(q)^2_\infty}   
\sum_{\substack{n_1\in\mathbb{Z} \\ n_2\geq |n_1|}}  (-1)^{n_1+n_2} q^{\frac{ n_2(n_2+1)}{2}-\frac{n_1^2}{2}}\zeta^{n_1}
.
\end{equation}
We note that the summation bound $n_2\geq |n_1|$ in \eqref{EqInverseJacobi}
can be replaced by $n_2\geq n_1$.

\section{Identities with half-characteristic}

In this section we extend Theorems \ref{euler-false-p} and \ref{euler-modular-p} from the introduction to half-characteristic.

\begin{proposition}\label{PropExtraIdentityStarter}
Suppose $k\ge1$
and $w\in\C$. Then
\begin{align}\label{EqConstantTermGeneral2}
&\sum_{n_1,n_2,\dotsc,n_k\ge0}
\frac{q^{n_1n_2 + n_2n_3 + \dotsb + n_{k-1}n_k + n_1+n_2+\dotsb+n_k}(-w)_{n_1}}
	{(q)_{n_1}^2(q)_{n_2}^2\dotsm (q)_{n_k}^2}
\nonumber\\
&=
	\CT_{\zeta_1,\zeta_2,\dotsc,\zeta_k}
	\frac{\phi\left(-wq^{\frac12}\zeta_1\right)}{\phi\left(q^{\frac12}\zeta_1\right)}
	\left(
		\prod_{j=2}^k \frac{1}{\phi\left(q^{\frac12}\zeta_j\right)\phi\left(q^{\frac12}\zeta_{j-1}^{-1}\right)}
	\right)
	\frac{1}{\phi\left(q^{\frac12}\zeta_k^{-1}\right)}
,
\end{align}
where the $\zeta_j$ are non-commuting variables with
$\zeta_j\zeta_{j+1}=q\zeta_{j+1}\zeta_{j}$ for $1\le j\le k-1$.
\end{proposition}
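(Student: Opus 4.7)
The plan is to expand each $\phi$-factor on the right-hand side of \eqref{EqConstantTermGeneral2} as a formal series in the relevant $\zeta_j$, re-order the resulting non-commutative monomials into a canonical form using the braid relations, and extract the constant term.

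For the leading ratio, apply the $q$-binomial theorem \eqref{EqQBinomialTheorem} with $a=-w$ and $z=q^{1/2}\zeta_1$:
\begin{equation*}
\frac{\phi(-wq^{1/2}\zeta_1)}{\phi(q^{1/2}\zeta_1)}
=\sum_{n_1 \ge 0}\frac{(-w)_{n_1}\,q^{n_1/2}\,\zeta_1^{n_1}}{(q)_{n_1}},
\end{equation*}
and apply Euler's identity \eqref{EqEuler} to each of the remaining $2k-1$ reciprocal factors:
\begin{equation*}
\frac{1}{\phi(q^{1/2}\zeta_j)} = \sum_{n_j\ge0}\frac{q^{n_j/2}\,\zeta_j^{n_j}}{(q)_{n_j}},
\qquad
\frac{1}{\phi(q^{1/2}\zeta_j^{-1})} = \sum_{m_j\ge0}\frac{q^{m_j/2}\,\zeta_j^{-m_j}}{(q)_{m_j}}.
\end{equation*}
Since each factor involves only a single $\zeta_j$, the scalar coefficients (which commute with everything) separate cleanly from the non-commuting $\zeta$-monomials, leaving the ordered product
\begin{equation*}
\zeta_1^{n_1}\cdot\zeta_2^{n_2}\zeta_1^{-m_1}\cdot\zeta_3^{n_3}\zeta_2^{-m_2}\cdots\zeta_k^{n_k}\zeta_{k-1}^{-m_{k-1}}\cdot\zeta_k^{-m_k}.
\end{equation*}

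Under the natural assumption that $\zeta_i$ and $\zeta_j$ commute whenever $|i-j|\ge 2$, only consecutive commutations are required. The relation $\zeta_j\zeta_{j+1}=q\zeta_{j+1}\zeta_j$ yields $\zeta_{j+1}^{a}\zeta_j^{-b}=q^{ab}\zeta_j^{-b}\zeta_{j+1}^{a}$, and applying this once to each of the $k-1$ adjacent out-of-order pairs $(\zeta_j^{n_j},\zeta_{j-1}^{-m_{j-1}})$, for $j=2,\ldots,k$, collects an overall factor $q^{\sum_{j=2}^{k} n_j m_{j-1}}$ and puts $\prod_{j=1}^{k}\zeta_j^{n_j-m_j}$ in standard order. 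The constant term in every $\zeta_j$ then forces $m_j=n_j$, turning the braiding factor into $q^{\sum_{j=1}^{k-1} n_j n_{j+1}}$ and collapsing each $(q)_{n_j}(q)_{m_j}$ to $(q)_{n_j}^2$. A routine tally of the half-integer $q$-powers from the expansions combines to $q^{n_1+\dotsb+n_k}$, and together with the surviving $(-w)_{n_1}$ factor this is exactly the left-hand side of \eqref{EqConstantTermGeneral2}.

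The only real obstacle is keeping careful track of the $q$-powers generated by the commutation moves; once the braiding identity $\zeta_{j+1}^{a}\zeta_j^{-b}=q^{ab}\zeta_j^{-b}\zeta_{j+1}^{a}$ is derived and one observes that exactly $k-1$ swaps suffice, the remainder is mechanical bookkeeping. The overall strategy mirrors the non-commutative constant-term approach already used in \cite{JM} to establish the identities underlying Theorems \ref{euler-false-p} and \ref{euler-modular-p}.
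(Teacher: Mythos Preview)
Your proposal is correct and follows essentially the same approach as the paper's own proof: expand the leading ratio via the $q$-binomial theorem, expand the remaining reciprocals via Euler's identity, reorder the monomial $\zeta_1^{n_1}\bigl(\prod_{j=2}^k \zeta_j^{n_j}\zeta_{j-1}^{-m_{j-1}}\bigr)\zeta_k^{-m_k}$ using the adjacent commutation relations to pick up $q^{\sum_{j=2}^k n_j m_{j-1}}$, and then set $m_j=n_j$ to extract the constant term. The only minor remark is that your stated assumption that non-adjacent $\zeta_i$ commute is in fact unnecessary here, since the particular ordering of the factors means only the $k-1$ adjacent swaps $\zeta_j^{n_j}\zeta_{j-1}^{-m_{j-1}}\mapsto q^{n_j m_{j-1}}\zeta_{j-1}^{-m_{j-1}}\zeta_j^{n_j}$ are ever needed.
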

\begin{proof}
The proof is similar to that of Theorems \ref{euler-false-p} and \ref{euler-modular-p}.
We expand $\phi(-wq^{\frac12}\zeta_1)/\phi(q^{\frac12}\zeta_1)$
with the $q$-binomial theorem \eqref{EqQBinomialTheorem}
and all other products are expanded with Euler's identity \eqref{EqEuler}.
By doing so we have
\begin{align*}
&\frac{\phi\left(-wq^{\frac12}\zeta_1\right)}{\phi\left(q^{\frac12}\zeta_1\right)}
\left(
	\prod_{j=2}^k \frac{1}{\phi\left(q^{\frac12}\zeta_j\right)\phi\left(q^{\frac12}\zeta_{j-1}^{-1}\right)}
\right)
\frac{1}{\phi\left(q^{\frac12}\zeta_k^{-1}\right)}
\\
&=
	\sum_{\bop{n},\bop{m}\in\N_0^k} 	
	\frac{q^{\frac{n_1+m_1+n_2+m_2+\dotsb+n_k+m_k}{2}} (-w)_{n_1}   }
	{(q)_{n_1}(q)_{m_1}(q)_{n_2}(q)_{m_2}\dotsm (q)_{n_k}(q)_{m_k} }
	\zeta_{1}^{n_1}
	\left(
		\prod_{j=2}^k \zeta_j^{n_j} \zeta_{j-1}^{-m_{j-1}}
	\right)
	\zeta_{k}^{-m_k}
\\
&=
	\sum_{\bop{n},\bop{m}\in\N_0^k} 	
	\frac{q^{\frac{n_1+m_1+n_2+m_2+\dotsb+n_k+m_k}{2} + n_2m_1 + n_3m_2+\dotsb+ n_km_{k-1}  } 
		(-w)_{n_1}
		\zeta_1^{n_1-m_1} \zeta_2^{n_2-m_2}\dotsm \zeta_k^{n_k-m_k}
	}{(q)_{n_1}(q)_{m_1}(q)_{n_2}(q)_{m_2}\dotsm (q)_{n_k}(q)_{m_k} }
.
\end{align*}
The constant term then clearly comes from taking $m_j=n_j$ and the
proposition follows.
\end{proof}

In the lemma below, we give an intermediate identity that is required
so that we may apply Bailey's lemma.

\begin{lemma}\label{LemmaMultiSum2}Suppose $k\ge2$ and $w\in\C$. Then
\begin{align*}
&\sum_{n_1,n_2,\dotsc,n_k\ge0}
\frac{q^{n_1n_2 + n_2n_3 + \dotsb + n_{k-1}n_k + n_1+n_2+\dotsb+n_k}(-w)_{n_1}}
	{(q)_{n_1}^2(q)_{n_2}^2\dotsm (q)_{n_k}^2}
\\
&=
	\frac{1}{(q)_\infty^{k}}
	\sum_{m_1,m_2,\dotsc,m_{k-1}\ge0}
	\frac{(-1)^{m_2+\dotsb+m_{k-1}} 
		q^{\frac{m_1(m_1+1)}{2}+\frac{m_2(m_2+1)}{2}+\dotsb+\frac{m_{k-1}(m_{k-1}+1)}{2}}
		w^{m_1}(-w^{-1}q)_{m_1}}
	{(q)_{m_1}(q)_{m_1-m_2}(q)_{m_2-m_3}\dotsm (q)_{m_{k-2}-m_{k-1}}}
.
\end{align*}
\end{lemma}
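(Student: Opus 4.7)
\emph{Proof plan.} The plan is to perform the summations on the left-hand side one variable at a time, starting from the innermost $n_k$ and iterating outward to $n_2$, thereby converting each squared factor $(q)_{n_i}^{-2}$ into a single $(q)_{m_{i-1}-m_i}^{-1}$-type factor with a new summation variable $m_{i-1}$. Each step peels off a factor of $(q)_\infty^{-1}$, and after iterating outward and finally handling $n_1$ separately we accumulate the prefactor $(q)_\infty^{-k}$ together with the $(k-1)$-fold sum on the right-hand side.

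Concretely, I would begin with the innermost sum
\[
\sum_{n_k \geq 0} \frac{q^{n_{k-1}n_k + n_k}}{(q)_{n_k}^2} \;=\; \sum_{n_k \geq 0} \frac{(q^{n_{k-1}+1})^{n_k}}{(q)_{n_k}^2}.
\]
Applying Heine's transformation \eqref{EqHeinesTranformation2} in the limiting form where the two numerator parameters of the ${}_2\phi_1$ are sent to $0$ (equivalently, combining Euler's identities \eqref{EqEuler} and \eqref{EqEuler2}), this rewrites the inner sum as a finite sum in a new index $m_{k-1}\le n_{k-1}$ carrying the factors $(-1)^{m_{k-1}} q^{m_{k-1}(m_{k-1}+1)/2} q^{n_{k-1}m_{k-1}}$ and a $(q)_\infty^{-1}$ prefactor, with denominator $(q)_{n_{k-1}-m_{k-1}}(q)_{m_{k-1}}$. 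The newly produced coupling $q^{n_{k-1}m_{k-1}}$ absorbs into the next quadratic term $n_{k-2}n_{k-1}+n_{k-1}$, so that the sum over $n_{k-1}$ is again of the same shape and can be treated the same way, yielding $m_{k-2}$ and a denominator $(q)_{m_{k-2}-m_{k-1}}^{-1}$.

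Iterating this scheme for $n_{k-2}, n_{k-3}, \ldots, n_2$ produces in total $k-1$ copies of $(q)_\infty^{-1}$, all the signs $(-1)^{m_j}$ for $j=2,\ldots,k-1$, the Gaussian exponents $q^{m_j(m_j+1)/2}$, and the full chain of denominators $(q)_{m_{j-1}-m_j}^{-1}$ with the telescoping restriction $m_1\ge m_2\ge\cdots\ge m_{k-1}\ge 0$. The remaining outermost sum over $n_1$ now has the form $\sum_{n_1}(-w)_{n_1} z_0^{n_1}/(q)_{n_1}$ with $z_0 = q^{m_1+1}$; here the $q$-binomial theorem \eqref{EqQBinomialTheorem} applies directly and yields the weight $w^{m_1}(-w^{-1}q)_{m_1}$ (with no accompanying $(-1)^{m_1}$) together with the final $(q)_\infty^{-1}$ prefactor.

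The main obstacle is the combinatorial bookkeeping: tracking how the quadratic exponents $q^{m_j(m_j+1)/2}$, the alternating signs, and the differences $m_{j-1}-m_j$ in the denominator assemble correctly through each iteration. The sign asymmetry on the right-hand side -- namely $(-1)^{m_2+\cdots+m_{k-1}}$ with no $(-1)^{m_1}$ -- is exactly what reflects the different treatment of the outermost sum (handled by the $q$-binomial theorem with $(-w)_{n_1}$ playing the role of $(a)_n$) versus the inner sums (handled by the limiting form of Heine which produces the alternating signs).
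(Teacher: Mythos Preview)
Your iterative scheme has a genuine gap at the very first step. The claimed evaluation of the innermost sum,
\[
\sum_{n_k \ge 0} \frac{q^{n_k(n_{k-1}+1)}}{(q)_{n_k}^2}
\stackrel{?}{=}
\frac{1}{(q)_\infty}
\sum_{m_{k-1}=0}^{n_{k-1}}
\frac{(-1)^{m_{k-1}} q^{\binom{m_{k-1}+1}{2}+n_{k-1}m_{k-1}}}
{(q)_{n_{k-1}-m_{k-1}}(q)_{m_{k-1}}},
\]
is false. At $n_{k-1}=0$ the right side is $1/(q)_\infty$, while the left side is
$\sum_{n\ge0} q^n/(q)_n^2 = (q)_\infty^{-2}\sum_{m\ge0}(-1)^m q^{m(m+1)/2}$
(this is the computation behind $F(1,3)$ in the paper), a partial theta over $(q)_\infty^2$, not $(q)_\infty^{-1}$. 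The limiting Heine transformation you invoke actually gives
\[
\sum_{n_k \ge 0} \frac{q^{n_k(n_{k-1}+1)}}{(q)_{n_k}^2}
=
\frac{1}{(q)_\infty^2}
\sum_{m\ge0}
\frac{(-1)^{m} q^{\binom{m+1}{2}} (q)_{n_{k-1}+m}}{(q)_{m}},
\]
with \emph{two} factors of $(q)_\infty^{-1}$ and $(q)_{n_{k-1}+m}$ in the \emph{numerator}, not a finite sum truncated at $n_{k-1}$. Consequently the ``same shape'' claim for the next iteration also fails: after combining with $(q)_{n_{k-1}}^{-2}$ you are left with $\sum_{n_{k-1}} z^{n_{k-1}}(q^{n_{k-1}+1})_m/(q)_{n_{k-1}}$, which is not of the original form and does not collapse to a single $(q)_{m_{k-2}-m_{k-1}}^{-1}$ factor.

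For comparison, the paper does not iterate variable by variable. It first passes to the constant-term representation of Proposition~\ref{PropExtraIdentityStarter}, applies the pentagon identity \eqref{pent2} to reorganize the quantum dilogarithms, and expands with \eqref{EqQBinomialTheorem}, \eqref{EqEuler}, and the inverse Jacobi identity \eqref{EqInverseJacobi}. Taking the constant term produces a $(2k-1)$-fold sum, which is then reduced: an auxiliary $F(x)$ is introduced to handle a convergence issue, Heine \eqref{EqHeinesTranformation1} is applied to the $r$-sum, Euler \eqref{EqEuler2} collapses each $\ell_j$-sum into a $(q)_\infty/(q)_{\cdots}$ factor, a second Heine \eqref{EqHeinesTranformation2} handles the $m_{k-1}$-sum, and finally \eqref{EqQBinomialTheorem} closes the $r$-sum. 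The chain $(q)_{m_1-m_2}\cdots(q)_{m_{k-2}-m_{k-1}}$ arises from those $\ell_j$-summations, not from an outward iteration on the $n_i$.
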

\begin{proof}
We begin by reevaluating the constant term in \eqref{EqConstantTermGeneral2}
by applying \eqref{pent2} and expanding the products
with \eqref{EqQBinomialTheorem}, \eqref{EqEuler}, and 
\eqref{EqInverseJacobi}.
For convenience with the indices, we instead use
$\zeta_0,\zeta_1,\dotsc,\zeta_{k-1}$.
With this all mind, we find that
\begin{align*}
&\frac{\phi\left(-wq^{\frac12}\zeta_0\right)}{\phi\left(q^{\frac12}\zeta_0\right)}
\left(
	\prod_{j=1}^{k-1} \frac{1}{\phi\left(q^{\frac12}\zeta_j\right)\phi\left(q^{\frac12}\zeta_{j-1}^{-1}\right)}
\right)
\frac{1}{\phi\left(q^{\frac12}\zeta_{k-1}^{-1}\right)}
\\
&=
	\frac{\phi\left(-wq^{\frac12}\zeta_0\right)}
		{\phi\left(q^{\frac12}\zeta_0\right)\phi\left(q^{\frac12}\zeta_0^{-1}\right)  }
	\prod_{j=1}^{k-1} \frac{1}{\phi\left(-\zeta_j\zeta_{j-1}^{-1}\right) 
		\phi\left(q^{\frac12}\zeta_j\right)\phi\left(q^{\frac12}\zeta_{j}^{-1}\right)}
\\
&=	
	\frac{1}{(q)_\infty^{2k-2}}
	\sum_{\substack{\bop{n},\bop{m}\in\Z^{k-1} \\ r_1,r_2\in\N_0, \bop{\ell}\in\N_0^{k-1} \\ m_j\ge n_j    }}
	\frac{ (-1)^{ \sum\limits_{j=1}^{k-1} n_j + \sum\limits_{j=1}^{k-1} m_j+\sum\limits_{j=1}^{k-1} \ell_j } 
		q^{ \sum\limits_{j=1}^{k-1}  \frac{m_j(m_j+1)}{2}-\sum\limits_{j=1}^{k-1}\frac{n_j^2}{2} + \frac{r_1+r_2}{2}}		
		(-w)_{r_1}	
	}{(q)_{\ell_1}(q)_{\ell_2}\dotsm (q)_{\ell_{k-1}} (q)_{r_1}(q)_{r_2}  }
	\\&\quad\times
	\zeta_0^{r_1-r_2} 
	\prod_{j=1}^{k-1}		
	\left(\zeta_j\zeta_{j-1}^{-1}\right)^{\ell_{j}} \zeta_j^{n_j} 	
\\
&=
	\frac{1}{(q)_\infty^{2k-2}}
	\sum_{\substack{\bop{n},\bop{m}\in\Z^{k-1} \\ r_1,r_2\in\N_0, \bop{\ell}\in\N_0^{k-1} \\ m_j\ge n_j    }}
	\frac{ (-1)^{ \sum\limits_{j=1}^{k-1} n_j + \sum\limits_{j=1}^{k-1} m_j+\sum\limits_{j=1}^{k-1} \ell_j } 
		q^{ \sum\limits_{j=1}^{k-1}\frac{m_j(m_j+1)}{2} - \sum\limits_{j=1}^{k-1}\frac{n_j^2}{2}
			+ \sum\limits_{j=1}^{k-1} \frac{\ell_j(\ell_j+1)}{2}		
			+ \frac{r_1+r_2}{2}
		}
		(-w)_{r_1}			
	}{(q)_{\ell_1}(q)_{\ell_2}\dotsm (q)_{\ell_{k-1}} (q)_{r_1}(q)_{r_2}  }
	\\&\quad\times
	\zeta_0^{r_1-r_2-\ell_1} 
	\left(
		\prod_{j=1}^{k-2} \zeta_j^{n_j+\ell_{j}-\ell_{j+1}}		
	\right)
	\zeta_{k-1}^{n_k+\ell_{k-1}} 
.
\end{align*}
The constant term comes from $n_j=\ell_{j+1}-\ell_{j}$
for $1\le j\le k-2$, $n_{k-1}=-\ell_{k-1}$, and $r_2=r_1-\ell_1$. 
For the index bounds, we replace $m_{k-1}\ge n_{k-1}$ with $m_{k-1}\ge|n_{k-1}|$. 
Thus by Proposition \ref{PropExtraIdentityStarter},
\begin{align}\label{EqThing1}
&\sum_{n_1,n_2,\dotsc,n_k\ge0}
\frac{q^{n_1n_2 + n_2n_3 + \dotsb + n_{k-1}n_k + n_1+n_2+\dotsb+n_k} (-w)_{n_1} }
	{(q)_{n_1}^2(q)_{n_2}^2\dotsm (q)_{n_k}^2}
\nonumber\\
&=
	\frac{1}{(q)_\infty^{2k-2}}
	\sum_{\substack{\bop{m}\in\Z^{k-1}, \bop{\ell}\in\N_0^{k-1} \\ r\in\N_0, r\ge \ell_1  
		\\ m_{k-1}\ge \ell_{k-1} \\ m_j\ge \ell_{j+1}-\ell_{j}      }
	}
	\frac{ 
	(-1)^{ \sum\limits_{j=1}^{k-1} m_j+\sum\limits_{j=2}^{k-2} \ell_j } 
		q^{ 
			\sum\limits_{j=1}^{k-1}  \frac{m_j(m_j+1)}{2} 
			+ \frac{\ell_1^2}{2}
			+ \sum\limits_{j=2}^{k-1} \frac{\ell_j(\ell_j+1)}{2}		
			- \sum\limits_{j=1}^{k-2} \frac{(\ell_{j+1}-\ell_{j})^2}{2}
			- \frac{\ell_{k-1}^2}{2}
			+ r
		}
		(-w)_r		
	}{(q)_{\ell_1}(q)_{\ell_2}\dotsm (q)_{\ell_{k-1}} (q)_r (q)_{r-\ell_1}}  
\nonumber\\
&=
	\frac{1}{(q)_\infty^{2k-2}}
	\sum_{ r\in\N_0,  \bop{\ell},\bop{m}\in\N_0^{k-1}  }
	\frac{ 
	(-1)^{ \sum\limits_{j=1}^{k-1} m_j + \sum\limits_{j=1}^{k-1} \ell_j } 
		q^{ \sum\limits_{j=1}^{k-1}  \frac{m_j(m_j+1)}{2} 
			+ \sum\limits_{j=1}^{k-2} \frac{\ell_j(\ell_j+1)}{2}		
			+ \frac{\ell_{k-1}(\ell_{k-1}+3)}{2}  			
		}
		(-w)_{r+\ell_1}		
	}{(q)_{\ell_1}(q)_{\ell_2}\dotsm (q)_{\ell_{k-1}} (q)_r(q)_{r+\ell_1}}  
	\nonumber\\&\quad\times
	q^{	- \ell_1m_1			
		+ \sum\limits_{j=2}^{k-2} \ell_j(m_{j-1}-m_{j}) 
		+ \ell_{k-1}(m_{k-2}+m_{k-1})
		+r
	}
.
\end{align}

Due to convergence issues in certain calculations below, we view the
far right-hand side of \eqref{EqThing1} as the $x\rightarrow 1$ case of
\begin{align}\label{EqExtraIdentityDefOfF}
F(x)
&:=
	\frac{1}{(q)_\infty^{2k-2}}
	\sum_{ r\in\N_0,  \bop{\ell},\bop{m}\in\N_0^{k-1}  }
	\frac{ 
	(-1)^{ \sum\limits_{j=1}^{k-1} m_j + \sum\limits_{j=1}^{k-1} \ell_j } 
		q^{ \sum\limits_{j=1}^{k-1}  \frac{m_j(m_j+1)}{2} 
			+ \sum\limits_{j=1}^{k-2} \frac{\ell_j(\ell_j+1)}{2}		
			+ \frac{\ell_{k-1}(\ell_{k-1}+3)}{2}  			
		}
		(-xw)_{r+\ell_1}		
	}{(q)_{\ell_1}(q)_{\ell_2}\dotsm (q)_{\ell_{k-1}} (q)_r(xq)_{r+\ell_1}}  
	\nonumber\\&\quad\times
	q^{	- \ell_1m_1			
		+ \sum\limits_{j=2}^{k-2} \ell_j(m_{j-1}-m_{j}) 
		+ \ell_{k-1}(m_{k-2}+m_{k-1})
		+r
	}
.
\end{align}
We transform the inner sum on $r$ with Heine's transformation
\eqref{EqHeinesTranformation1} with $a=0$, $b= -xwq^{\ell_1}$, 
$c=xq^{\ell_1+1}$, and $z=q$, as
\begin{align*}
\sum_{r\ge0} \frac{(-xw)_{r+\ell_1} q^r}{(q)_r(xq)_{r+\ell_1}}
&=
\frac{(-xw)_{\ell_1}}{(xq)_{\ell_1}}
\sum_{r\ge0} \frac{(-xwq^{\ell_1})_{r} q^r}{(q)_r(xq^{\ell_1+1})_{r}}
=
\frac{(-xw)_{\infty}}{(xq)_\infty (q)_\infty}
\sum_{r\ge0} (-w^{-1}q)_{r} (-1)^r x^r w^r q^{\ell_1 r}
,
\end{align*}
we note that when $x=1$, the final series above is not absolutely convergent 
for all $w$ and $\ell_1$.
Thus for $|xw|<1$,
\begin{align*}
F(x)
&=
	\frac{(-xw)_\infty}{(xq)_\infty(q)_\infty^{2k-1}}
	\sum_{ r\in\N_0,  \bop{\ell},\bop{m}\in\N_0^{k-1}  }
	\frac{ 
	(-1)^{ \sum\limits_{j=1}^{k-1} m_j + \sum\limits_{j=1}^{k-1} \ell_j + r} 
		q^{ \sum\limits_{j=1}^{k-1}  \frac{m_j(m_j+1)}{2} 
			+ \sum\limits_{j=1}^{k-2} \frac{\ell_j(\ell_j+1)}{2}		
			+ \frac{\ell_{k-1}(\ell_{k-1}+3)}{2}  			
		}		
	}{(q)_{\ell_1}(q)_{\ell_2}\dotsm (q)_{\ell_{k-1}} }  
	\\&\quad\times
	q^{	- \ell_1m_1			
		+ \sum\limits_{j=2}^{k-2} \ell_j(m_{j-1}-m_{j}) 
		+ \ell_{k-1}(m_{k-2}+m_{k-1})
		+\ell_1r
	}
	x^r w^r (-w^{-1}q)_r
.
\end{align*}
We evaluate the inner sums on each $\ell_j$ with \eqref{EqEuler2}
to find that 
\begin{gather*}
\sum_{\ell_1\ge0} \frac{ (-1)^{\ell_1} q^{\ell_1(1+r-m_1)} q^{\frac{\ell_1(\ell_1-1)}{2}}  }
{(q)_{\ell_1}}
=
	(q^{1+r-m_1})_\infty
	=
	\frac{(q)_\infty}{(q)_{r-m_1}}
,
\\
\sum_{\ell_j\ge0} \frac{ (-1)^{\ell_j} q^{\ell_j(1+m_{j-1}-m_{j})} q^{\frac{\ell_j(\ell_j-1)}{2}}  }
{(q)_{\ell_j}}
=
	(q^{1+m_{j-1}-m_{j}})
	=
	\frac{(q)_\infty}{(q)_{m_{j-1}-m_{j}}}
,\qquad
\mbox{for } 2\le j\le k-2,
\\
\sum_{\ell_{k-1}\ge0} \frac{ (-1)^{\ell_{k-1}} q^{\ell_j(2+m_{k-2}+m_{k-1})} q^{\frac{\ell_{k-1}(\ell_{k-1}-1)}{2}}  }
{(q)_{\ell_{k-1}}}
=
	(q^{2+m_{k-2}+m_{k-1}})
	=
	\frac{(q)_\infty}{(q)_{m_{k-2}+m_{k-1}+1}}
.
\end{gather*}
Thus, for $|xw|<1$,
\begin{align*}
F(x)
&=
	\frac{(-xw)_\infty}{(xq)_\infty(q)_\infty^{k}}
	\sum_{ r,m_1,m_2,\dotsc,m_{k-1}\ge0 }
	\frac{ 
		(-1)^{ \sum\limits_{j=1}^{k-1}m_j + r } 
		q^{ \sum\limits_{j=1}^{k-1}\frac{m_j(m_j+1)}{2}  }
		x^r w^r (-w^{-1}q)_r		
	}{(q)_{r-m_1}(q)_{m_1-m_2}(q)_{m_2-m_3}\dotsm (q)_{m_{k-3}-m_{k-2}} (q)_{m_{k-2}+m_{k-1}+1} }  
\\
&=
	\frac{(-xw)_\infty}{(xq)_\infty(q)_\infty^{k}}
	\sum_{ r,m_1,m_2,\dotsc,m_{k-1}\ge0 }
	\frac{ 
		(-1)^{ \sum\limits_{j=1}^{k-1}m_j + r } 
		q^{ \sum\limits_{j=1}^{k-1}\frac{m_j(m_j+1)}{2} }
		x^r w^r (-w^{-1}q)_r		
	}{(q)_{r-m_1}(q)_{m_1-m_2}(q)_{m_2-m_3}\dotsm (q)_{m_{k-2}-m_{k-1}}  }  
,
\end{align*}
where the second equality follows from Heine's transformation
\eqref{EqHeinesTranformation2} with
$a\rightarrow\infty$, $b=q$, $c= q^{2+m_{k-2}}$, $z= \frac{q}{a}$,
applied to the inner sum on $m_{k-1}$.
By \eqref{EqQBinomialTheorem}, the sum on $r$ is
\begin{align*}
\sum_{r\ge0}\frac{(-1)^r x^r w^r (-w^{-1}q)_r}{(q)_{r-m_1}}
&=
	(-1)^{m_1}x^{m_1}w^{m_1}(-w^{-1}q)_{m_1}
	\sum_{r\ge0}\frac{(-1)^r x^r w^r (-w^{-1}q^{m_1+1})_r }{(q)_{r}}
\\
&=
		\frac{(-1)^{m_1}x^{m_1}w^{m_1}(-w^{-1}q)_{m_1} (xq)_\infty }
		{(xq)_{m_1}(-xw)_\infty}	
\end{align*}
and so
\begin{align*}
F(x)
&=
	\frac{1}{(q)_\infty^{k}}
	\sum_{ m_1,m_2,\dotsc,m_{k-1}\ge0 }
	\frac{ 
		(-1)^{ \sum\limits_{j=2}^{k-1}m_j } 
		q^{ \sum\limits_{j=1}^{k-1}\frac{m_j(m_j+1)}{2}   }
		x^{m_1} w^{m_1} 
		(-w^{-1}q)_{m_1}		
	}{(xq)_{m_1}(q)_{m_1-m_2}(q)_{m_2-m_3}\dotsm (q)_{m_{k-2}-m_{k-1}}  }  
.
\end{align*}
This form of $F(x)$ is well defined for exactly the same values of $x$ as
\eqref{EqExtraIdentityDefOfF} and so we find the lemma follows by setting
$x=1$.
\end{proof}

Our extension of Theorems \ref{euler-false-p} and \ref{euler-modular-p}
to the series in \eqref{half} is given here.

\begin{theorem} \label{main-4}
Suppose $k\ge2$. Then
\begin{multline*}
\sum_{n_1,n_2,\dotsc,n_k\ge0}
\frac{q^{n_1n_2 + n_2n_3 + \dotsb + n_{k-1}n_k + n_1+n_2+\dotsb+n_k}(-1)_{n_1}}
	{(q)_{n_1}^2(q)_{n_2}^2\dotsm (q)_{n_k}^2}
\\
=
\shoveright{
	\frac{(-q)_\infty}{(q)_\infty^{k+1}}
	\left(\sum_{n\ge0}+(-1)^{k}\sum_{n<0}\right)
	(-1)^{(k+1)n}
	q^{\frac{(k+2)n^2+kn}{2}}
,}
\\
\shoveleft{
\sum_{n_1,n_2,\dotsc,n_k\ge0}
\frac{q^{n_1n_2 + n_2n_3 + \dotsb + n_{k-1}n_k + n_1+n_2+\dotsb+n_k}(-q^{\frac12})_{n_1}}
	{(q)_{n_1}^2(q)_{n_2}^2\dotsm (q)_{n_k}^2}
}
\\
=
	\frac{(-q^{\frac12})_\infty}{(q)_\infty^{k+1}}
	\left(\sum_{n\ge0}+(-1)^{k}\sum_{n<0}\right)
	(-1)^{(k+1)n}
	q^{\frac{(k+2)n^2+(k+1)n}{2}}
.
\end{multline*}
\end{theorem}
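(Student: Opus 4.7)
The plan is to recognize that both left-hand sides are the $w=1$ and $w=q^{1/2}$ specializations of the sum treated in Lemma \ref{LemmaMultiSum2}, since $(-1)_{n_1}=(-w)_{n_1}|_{w=1}$ and $(-q^{1/2})_{n_1}=(-w)_{n_1}|_{w=q^{1/2}}$. Applying the lemma rewrites each LHS as
\begin{equation*}
\frac{1}{(q)_\infty^{k}}\sum_{m_1,\dotsc,m_{k-1}\ge0}\frac{(-1)^{m_2+\dotsb+m_{k-1}}q^{\sum_j m_j(m_j+1)/2}w^{m_1}(-w^{-1}q)_{m_1}}{(q)_{m_1}(q)_{m_1-m_2}\dotsm(q)_{m_{k-2}-m_{k-1}}},
\end{equation*}
which is exactly the shape of the iterated Bailey chain \eqref{EqBaileyChainSpecialized1}: the missing factor $(q)_{m_{k-1}}\beta_{m_{k-1}}$ is identically $1$ when one takes Slater's Bailey pair $B(3)$ from \eqref{EqDefBaileyPairB3}, for which $\beta^{B3}_{m_{k-1}}=1/(q)_{m_{k-1}}$.

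Feeding $B(3)$ into \eqref{EqBaileyChainSpecialized1}, using $(q^2)_\infty(1-q)=(q)_\infty$, and collapsing $\frac{(k-1)n(n+1)}{2}+\frac{n(3n+1)}{2}=\frac{(k+2)n^2+kn}{2}$, produces the common single-sum form
\begin{equation*}
\mathrm{LHS}=\frac{(-wq)_\infty}{(q)_\infty^{k+1}}\sum_{n\ge0}\frac{(-w^{-1}q)_n(-1)^{(k+1)n}q^{\frac{(k+2)n^2+kn}{2}}w^n(1-q^{2n+1})}{(-wq)_n}.
\end{equation*}
For $w=1$ the ratio $(-w^{-1}q)_n/(-wq)_n$ is $1$ and the prefactor is $(-q)_\infty/(q)_\infty^{k+1}$. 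For $w=q^{1/2}$ the ratio telescopes to $(1+q^{1/2})/(1+q^{n+1/2})$; writing $1-q^{2n+1}=(1-q^{n+1/2})(1+q^{n+1/2})$ cancels the denominator, the spare $(1+q^{1/2})$ combines with $(-q^{3/2})_\infty$ to give $(-q^{1/2})_\infty$, and the factor $w^n=q^{n/2}$ shifts the linear exponent from $kn/2$ to $(k+1)n/2$.

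Finally, in each case the bracket $(1-q^{e(n)})$, with $e(n)=2n+1$ or $n+\tfrac12$, splits the one-line sum into a positive and a subtracted piece, and the substitution $n\mapsto-n-1$ turns the subtracted piece into a sum indexed by $n<0$. One verifies directly that the subtracted exponent is carried to the original exponent evaluated at the new index, and that $(-1)^{(k+1)(-n-1)}=(-1)^{k+1}(-1)^{(k+1)n}$ combines with the overall minus sign to produce the prefactor $(-1)^k$ in front of the negative-$n$ sum, matching the bilateral form on the right-hand side. The main obstacle is the bookkeeping in the $w=q^{1/2}$ branch, where one must track simultaneously the telescoping of $(-q^{1/2})_n/(-q^{3/2})_n$, the cancellation against $(1-q^{2n+1})$, and the reindexing that yields the stated shift from $kn$ to $(k+1)n$ in the exponent; beyond this, the argument is mechanical.
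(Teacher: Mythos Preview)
Your proposal is correct and follows essentially the same route as the paper: both invoke Lemma~\ref{LemmaMultiSum2}, feed Slater's pair $B(3)$ into the iterated Bailey chain \eqref{EqBaileyChainSpecialized1} to obtain the single-sum identity \eqref{EqExtraIdentitiesFinal}, and then specialize to $w=1$ and $w=q^{1/2}$ with the reindexing $n\mapsto -n-1$ to reach the bilateral form. Your write-up in fact supplies more of the telescoping and reindexing bookkeeping for the $w=q^{1/2}$ branch than the paper does.
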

\begin{proof}
We see that the series in the right-hand side of the identity in Lemma \ref{LemmaMultiSum2}
perfectly matches the statement of Bailey's lemma \eqref{EqBaileyChainSpecialized1} with
the Bailey pair in \eqref{EqDefBaileyPairB3}. By combining these statements
we have that
\begin{multline}\label{EqExtraIdentitiesFinal}
\sum_{n_1,n_2,\dotsc,n_k\ge0}
\frac{q^{n_1n_2 + n_2n_3 + \dotsb + n_{k-1}n_k + n_1+n_2+\dotsb+n_k}(-w)_{n_1}}
	{(q)_{n_1}^2(q)_{n_2}^2\dotsm (q)_{n_k}^2}
\\
=
	\frac{(-wq)_\infty}{(q)_\infty^{k+1}}
	\sum_{n\ge0}
	\frac{(-w^{-1}q)_n (-1)^{(k+1)n}  
		q^{\frac{(k+2)n^2+kn}{2}}	
		w^n (1-q^{2n+1})
	}{(-wq)_n}
.
\end{multline}

When $w=1$, the right-hand side of \eqref{EqExtraIdentitiesFinal} simplifies to 
\begin{multline}
\frac{(-q)_\infty}{(q)_\infty^{k+1}}
\sum_{n\ge0} (-1)^{(k+1)n} q^{\frac{(k+2)n^2+kn}{2}} (1-q^{2n+1})
\\
=
\frac{(-q)_\infty}{(q)_\infty^{k+1}}
\left(\sum_{n\ge0}+(-1)^{k}\sum_{n<0}\right) 
(-1)^{(k+1)n} q^{\frac{(k+2)n^2+kn}{2}} ,
\end{multline}
as claimed.
When $w=q^{\frac12}$, the right-hand side of \eqref{EqExtraIdentitiesFinal}
instead simplifies as 
\begin{multline*}
\frac{(-q^{\frac12})_\infty}{(q)_\infty^{k+1}}
\sum_{n\ge0} (-1)^{(k+1)n} q^{\frac{(k+2)n^2+(k+1)n}{2}} (1-q^{n+\frac12})
\\
=
\frac{(-q^{\frac12})_\infty}{(q)_\infty^{k+1}}
\left(\sum_{n\ge0}+(-1)^{k}\sum_{n<0}\right) 
(-1)^{(k+1)n} q^{\frac{(k+2)n^2+(k+1)n}{2}}.
\end{multline*}
\end{proof}

There is a similar identity that comes from taking $w=-q^{\frac12}$.
We state this identity in the proposition below, but omit the proof
as it is essentially the same as the other two cases.
\begin{proposition} \label{main-4a}
Suppose $k\ge2$. Then
\begin{multline*}
\sum_{n_1,n_2,\dotsc,n_k\ge0}
\frac{q^{n_1n_2 + n_2n_3 + \dotsb + n_{k-1}n_k + n_1+n_2+\dotsb+n_k}(q^{\frac12})_{n_1}}
	{(q)_{n_1}^2(q)_{n_2}^2\dotsm (q)_{n_k}^2}
\\
=
	\frac{(q^{\frac12})_\infty}{(q)_\infty^{k+1}}
	\left(\sum_{n\ge0}+(-1)^{k}\sum_{n<0}\right)
	(-1)^{kn}
	q^{\frac{(k+2)n^2+(k+1)n}{2}}
.
\end{multline*}
\end{proposition}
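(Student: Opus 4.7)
The plan is to follow exactly the same template used for the proof of Theorem \ref{main-4}: the master identity \eqref{EqExtraIdentitiesFinal} already holds for an arbitrary parameter $w \in \mathbb{C}$, so all that remains is to specialize it at $w = -q^{\frac12}$ (which is what turns $(-w)_{n_1}$ on the left-hand side into $(q^{\frac12})_{n_1}$) and then simplify the resulting right-hand side into the claimed false-theta-like form.

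First, I would substitute $w = -q^{\frac12}$ directly into \eqref{EqExtraIdentitiesFinal}. The various $q$-Pochhammer symbols collapse cleanly: $(-wq)_\infty = (q^{\frac{3}{2}})_\infty$, $(-w^{-1}q)_n = (q^{\frac12})_n$, $(-wq)_n = (q^{\frac{3}{2}})_n$, and $w^n = (-1)^n q^{n/2}$. Combining the sign $(-1)^n$ with $(-1)^{(k+1)n}$ gives $(-1)^{kn}$, and the $q^{n/2}$ merges with $q^{\frac{(k+2)n^2+kn}{2}}$ to produce $q^{\frac{(k+2)n^2+(k+1)n}{2}}$, exactly matching the exponent in the stated proposition.

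Next, I would dispose of the remaining Pochhammer ratio. The identity
\[
\frac{(q^{\frac12})_n(1-q^{2n+1})}{(q^{\frac{3}{2}})_n}
= (1-q^{\frac12})(1 + q^{n+\frac12})
\]
follows at once from $(q^{\frac{3}{2}})_n = (q^{\frac12})_n(1-q^{n+\frac12})/(1-q^{\frac12})$ and the factorization $1-q^{2n+1} = (1-q^{n+\frac12})(1+q^{n+\frac12})$. Absorbing the prefactor $(1-q^{\frac12})$ into $(q^{\frac{3}{2}})_\infty$ produces the cleaner $(q^{\frac12})_\infty$ that appears in the target identity.

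It then remains to split the factor $(1 + q^{n+\frac12})$ into two separate sums over $n \ge 0$ and make the change of index $n \mapsto -n-1$ in the tail involving $q^{n+\frac12}$. A short computation shows that the new exponent matches $\frac{(k+2)n^2 + (k+1)n}{2}$ evaluated at the shifted index, and the sign $(-1)^{kn}$ picks up exactly one factor of $(-1)^k$ from the shift, producing the prefactor $(-1)^k$ in front of $\sum_{n<0}$ as stated. The only real care needed is tracking the signs and the quadratic completion under the reindexing; that is the one place where an arithmetic slip is easy, but it is purely mechanical and mirrors the corresponding step in the $w=q^{\frac12}$ case of Theorem \ref{main-4}. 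Everything else is bookkeeping, which is why the authors omit the proof.
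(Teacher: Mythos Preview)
Your proposal is correct and follows precisely the route the paper indicates: specialize \eqref{EqExtraIdentitiesFinal} at $w=-q^{\frac12}$, simplify the Pochhammer ratio via $\frac{(q^{\frac12})_n(1-q^{2n+1})}{(q^{\frac32})_n}=(1-q^{\frac12})(1+q^{n+\frac12})$, absorb $(1-q^{\frac12})$ into the prefactor, and reindex the $q^{n+\frac12}$ tail by $n\mapsto -n-1$. This is exactly the ``essentially the same as the other two cases'' argument the paper alludes to when omitting the proof.
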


\section{Identities for characters}

In this section we study $q$-series identities coming from the formal inversion $q \mapsto q^{-1}$ in the $q$-hypergeometric term in (\ref{odd}) and (\ref{even}). As in \cite{JM} we make use of quantum dilogarithms to prove the following result. The same result was discussed in \cite{CNV}.
\begin{proposition} \label{principal} Let 
$${\rm ch}_W(\tau)= \sum_{n_1,n_2,\dotsc,n_{2k-1} \geq 0}  
\frac{q^{\sum_{i=1}^{2k-1} n_i^2-\sum_{i=1}^{2k-2} n_{i}n_{i+1}}}{(q)_{n_1} (q)_{n_2} \cdots (q)_{n_{2k-1} }}$$
denote the character of the principal subspaces of the level one vertex operator algebra $W(\Lambda_0)$ of type $A^{(1)}_{2k-1}$ \cite{CLM, FS}.
Then 
$$\sum_{n_1,n_2,\dotsc,n_{2k} \geq 0 } \frac{q^{\sum_{i=1}^{2k} n_i^2-\sum_{i=1}^{2k-1} n_{i}n_{i+1}}}
{(q)_{n_1}^2 (q)_n^2 \cdots (q)_{n_{2k}}^2}=\frac{1}{(q)_\infty^{2k}} {\rm ch}_{W}(\tau).$$
Moreover, after multiplication by $q^a$ for some $a \in \mathbb{Q}$, this a modular form. \end{proposition}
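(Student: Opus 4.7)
The plan is to carry out the proof using the quantum dilogarithm framework, as in \cite{JM} and the proof of Proposition \ref{PropExtraIdentityStarter}. I would realize the left-hand side as the constant term of a non-commutative product of inverse dilogarithms, rearrange via the pentagon identity \eqref{pent2}, and identify the result with $\frac{1}{(q)_\infty^{2k}}\,\mathrm{ch}_W(\tau)$.

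First, I would introduce $2k$ non-commuting variables $\zeta_1,\dotsc,\zeta_{2k}$ whose $q$-commutation relations are tuned so that
$$\CT_{\zeta_1,\dotsc,\zeta_{2k}}\prod_{j=1}^{2k}\frac{1}{\phi(q^{1/2}\zeta_j)\,\phi(q^{1/2}\zeta_j^{-1})}$$
reproduces the left-hand side. Expanding each $\phi^{-1}$-factor by Euler's identity \eqref{EqEuler} yields a multi-sum in which the diagonal $n_i^2$ in the exponent arises from pairing $\phi^{-1}(q^{1/2}\zeta_i)$ with $\phi^{-1}(q^{1/2}\zeta_i^{-1})$ together with the two $q^{1/2}$-shifts, while the cross term $-n_i n_{i+1}$ arises from $q$-commuting the adjacent variables before extracting the constant term. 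Note that this is the formal $q\mapsto q^{-1}$ image of the right-hand side of \eqref{even}, since $(q)_n\mapsto(-1)^n q^{-n(n+1)/2}(q)_n$ converts the exponent $\sum n_i n_{i+1}+\sum n_i$ of \eqref{even} into $\sum n_i^2-\sum n_i n_{i+1}$.

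Second, I would apply the pentagon identity \eqref{pent2} iteratively to appropriately chosen adjacent $\phi^{-1}$-pairs, each time inserting a middle factor of the form $\phi^{-1}(-q\,\zeta_{j+1}\zeta_j^{-1})$. After the full sequence of rearrangements, the $2k-1$ newly created middle factors---when expanded by \eqref{EqEuler}---yield exactly the sum defining $\mathrm{ch}_W(\tau)$, while the remaining $2k$ outer factors become $\zeta$-independent and collapse to the scalar prefactor $1/(q)_\infty^{2k}$ via \eqref{EqEuler} or \eqref{EqQBinomialTheorem}. The drop in summation dimension from $2k$ on the LHS to $2k-1$ on the RHS is precisely effected by this sequence of pentagon rearrangements.

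Finally, modularity after multiplication by $q^a$ for an appropriate $a\in\mathbb{Q}$ follows because $\mathrm{ch}_W(\tau)$ admits the classical Andrews--Gordon-type infinite product representation and so is modular after a suitable $q$-shift, while $1/(q)_\infty^{2k}$ is (up to $q^{k/12}$) the modular form $\eta(\tau)^{-2k}$. The principal obstacle is organizing the non-commutative relations and the chain of pentagon applications so that the $2k$ outer factors cleanly separate as $1/(q)_\infty^{2k}$ and the $2k-1$ middle factors assemble into $\mathrm{ch}_W(\tau)$; by analogy with Lemma \ref{LemmaMultiSum2} and the proof of Theorem \ref{main-4}, this should be a mechanical though intricate computation.
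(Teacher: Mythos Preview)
Your overall strategy---realize the left-hand side as a constant term of a product of quantum dilogarithms, apply the pentagon identity, and read off the right-hand side---is exactly the paper's. However, there is a genuine error in the setup: you use \emph{inverse} dilogarithms $1/\phi$, expanded via \eqref{EqEuler}, whereas the correct choice (and the paper's) is the dilogarithms $\phi$ themselves, expanded via \eqref{EqEuler2}. Expanding $\phi^{-1}(q^{1/2}\zeta_i)\,\phi^{-1}(q^{1/2}\zeta_i^{-1})$ with \eqref{EqEuler} and taking the diagonal $n_i=m_i$ contributes only the linear power $q^{n_i}$; the diagonal term $q^{n_i^2}$ cannot arise this way, since $q$-commuting distinct variables produces only bilinear powers $q^{\pm n_i n_j}$, and $\zeta_i$ commutes with $\zeta_i^{-1}$. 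By contrast, $\phi(q^{1/2}\zeta_i)$ expanded via \eqref{EqEuler2} carries the intrinsic factor $q^{n_i(n_i-1)/2}$, and pairing it with $\phi(q^{1/2}\zeta_i^{-1})$ yields exactly $q^{n_i^2}$ on the diagonal. This is precisely the effect of the formal substitution $q\mapsto q^{-1}$ you allude to: it inverts the dilogarithm, so the constant-term representation analogous to Proposition~\ref{PropExtraIdentityStarter} here uses $\phi$ in place of $1/\phi$, and correspondingly the pentagon identity \eqref{pent1} in place of \eqref{pent2}. With that single correction the argument is short (not ``intricate''): one pentagon move at each adjacent pair inserts the middle factors $\phi(-q\,\zeta_j\zeta_{j-1}^{-1})$ and leaves each pair $\phi(q^{1/2}\zeta_j)\phi(q^{1/2}\zeta_j^{-1})$ adjacent, after which the constant term is immediate.

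For modularity, your appeal to an ``Andrews--Gordon-type infinite product'' is not available for the $A_{2k-1}^{(1)}$ level-one principal subspace in general. The paper instead invokes \cite[Theorem~2.1]{WZ} to express ${\rm ch}_W(\tau)$ as a modular Wronskian of theta functions, whose modularity is established in \cite{BCFK,Mil}.
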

\begin{proof}
This follows by verifying that
\begin{align*}
&\sum_{n_1,\dotsc,n_k \geq 0 } 
	\frac{ q^{n_1^2+n_2^2+\dotsb+n_k^2 - n_1n_2-n_2n_3-\dotsb-n_{k-1}n_k}}
	{(q)_{n_1}^2 (q)_{n_2}^2 \dotsm (q)_{n_k}^2}
\\
&=
\CT_{\zeta_1,\zeta_2,\dotsc,\zeta_k}
	\phi(q^{\frac12}\zeta_1)
	\left( 
		\prod_{j=2}^k \phi( q^{\frac12}\zeta_j) \phi( q^{\frac12}\zeta_{j-1}^{-1})
	\right)
	\phi( q^{\frac12}\zeta_k^{-1})
\\
&=
\CT_{\zeta_1,\zeta_2,\dotsc,\zeta_k}
	\phi(q^{\frac12}\zeta_1)
	\phi(q^{\frac12}\zeta_1^{-1})
	\prod_{j=2}^k \phi( -q\zeta_j \zeta_{j-1}^{-1}) 
		\phi( q^{\frac12}\zeta_{j})
		\phi( q^{\frac12}\zeta_{j}^{-1})
\\
&=
\frac{1}{(q)_\infty^k}
\sum_{n_1,\dotsc,n_{k-1} \geq 0}  
	\frac{ q^{n_1^2+n_2^2+\dotsb+n_{k-1}^2 - n_1n_2-n_2n_3-\dotsb-n_{k-2}n_{k-1}}}
	{(q)_{n_1} (q)_{n_2} \dotsm (q)_{n_{k-1}}}
,
\end{align*}
where the $\zeta_i$ are non-commuting variables with $\zeta_i\zeta_{i+1} = q^{-1}\zeta_{i+1}\zeta_i$.

For the modularity, we first use an identity from  \cite[Theorem 2.1]{WZ}. This allows us to write ${\rm ch}_W(\tau)$
as a modular Wronskian of certain theta functions \cite{BCFK} (as usual, we have to multiply by $q^a$), 
which is known to be modular with respect to some congruence subgroup \cite{Mil}.

\end{proof}

For the $q$-hypergeometric series appearing in Theorem \ref{euler-false-p} we 
expect a family of modular identities. 
We first define $F_k(q)$ by letting 
\begin{equation} \label{C-series}
\sum_{n_1,n_2,\dotsc,n_{2k+1} \geq 0 } \frac{q^{\sum_{i=1}^{2k+1} n_i^2-\sum_{i=1}^{2k} n_{i} n_{i+1} }}
{(q)_{n_1}^2 (q)_{n_2}^2 \cdots (q)_{n_{2k+1}}^2}
= \frac{1}{(q)_\infty^{2k+1}} F_k(q).
\end{equation}
We believe the following to be true.
\begin{conjecture} \label{C-character}
For $k\ge1$, we have
$$F_k(q)={\rm ch}[L_{\frak{sp}(2k)}(\Lambda_0)](q),$$
which is the character of the (suitably normalized) level one  affine vertex algebra 
$L_{\frak{sp}(2k)}(\Lambda_0)$ of type $C^{(1)}_{k}$.
\end{conjecture}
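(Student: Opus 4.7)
The plan is to mirror the quantum-dilogarithm strategy of Proposition \ref{principal} and reduce the conjecture to a Nahm-sum identity for the basic $C_k^{(1)}$ character. First I would express the left-hand side of \eqref{C-series} as a constant term,
\begin{equation*}
\sum_{n_1,\dotsc,n_{2k+1}\ge0}
\frac{q^{\sum_{i=1}^{2k+1} n_i^2-\sum_{i=1}^{2k} n_i n_{i+1}}}{(q)_{n_1}^2(q)_{n_2}^2\dotsm (q)_{n_{2k+1}}^2}
=
\CT_{\zeta_1,\dotsc,\zeta_{2k+1}}
\phi(q^{\frac12}\zeta_1)
\left(\prod_{j=2}^{2k+1}\phi(q^{\frac12}\zeta_j)\phi(q^{\frac12}\zeta_{j-1}^{-1})\right)
\phi(q^{\frac12}\zeta_{2k+1}^{-1}),
\end{equation*}
with $\zeta_i\zeta_{i+1}=q^{-1}\zeta_{i+1}\zeta_i$, exactly as in the first display of Proposition \ref{principal} but with the number of variables bumped from $k$ to $2k+1$.

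Applying the pentagon identity \eqref{pent1} to each neighboring pair $\phi(q^{\frac12}\zeta_j)\phi(q^{\frac12}\zeta_{j-1}^{-1})$ as in Proposition \ref{principal} should collapse the expression to
\begin{equation*}
\CT_{\zeta_1,\dotsc,\zeta_{2k+1}}
\phi(q^{\frac12}\zeta_1)\phi(q^{\frac12}\zeta_1^{-1})
\prod_{j=2}^{2k+1}\phi(-q\zeta_j\zeta_{j-1}^{-1})\phi(q^{\frac12}\zeta_j)\phi(q^{\frac12}\zeta_j^{-1}).
\end{equation*}
Expanding each self-conjugate pair $\phi(q^{\frac12}\zeta_j)\phi(q^{\frac12}\zeta_j^{-1})$ by \eqref{EqInverseJacobi} and each factor $\phi(-q\zeta_j\zeta_{j-1}^{-1})$ by Euler's identity \eqref{EqEuler2}, and then reading off the constant term exactly as in Proposition \ref{principal}, one should arrive at the single-copy Nahm presentation
\begin{equation*}
F_k(q) = \sum_{n_1,\dotsc,n_{2k}\ge0}
\frac{q^{\sum_{i=1}^{2k}n_i^2-\sum_{i=1}^{2k-1}n_in_{i+1}}}{(q)_{n_1}(q)_{n_2}\dotsm(q)_{n_{2k}}},
\end{equation*}
the natural odd-length analogue of $\mathrm{ch}_W(\tau)$ from Proposition \ref{principal}.

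The final step is to identify this Nahm sum with $\mathrm{ch}[L_{\mathfrak{sp}(2k)}(\Lambda_0)](q)$, and this is where I expect the main obstacle to lie. Unlike the $A^{(1)}_{2k-1}$ principal subspace of Proposition \ref{principal}, for which an Andrews--Gordon style fermionic formula is provided by \cite{CLM,FS}, no comparable Nahm-sum formula for the level-one basic $C^{(1)}_k$ character appears in the literature, and the paper's Proposition \ref{O3} only verifies the conjectured equality up to a cubic error. I would attempt to bridge this gap either by (a) iterating Bailey's lemma on the unpaired factor, in the spirit of how the Bailey pair \eqref{EqDefBaileyPairB3} is used in Theorem \ref{main-4}, in the hope of converting the Nahm sum into a theta series matching the Kac--Peterson expression for the basic $C^{(1)}_k$ module, or (b) invoking a Macdonald-type identity for $C^{(1)}_k$ to expand the Weyl denominator and then matching against the Nahm sum coefficient by coefficient. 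Either route requires a genuinely new fermionic character formula for $L_{\mathfrak{sp}(2k)}(\Lambda_0)$, which is essentially the content of the conjecture and explains why the paper stops short of a full proof.
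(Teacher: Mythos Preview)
Your reduction is exactly right and matches the paper. The constant-term/pentagon calculation you outline is precisely the ``main identity in the proof of Proposition \ref{principal}'', which is stated there for general $k$ and gives
\[
F_k(q)=\sum_{n_1,\dotsc,n_{2k}\ge0}
\frac{q^{\sum_{i=1}^{2k}n_i^2-\sum_{i=1}^{2k-1}n_in_{i+1}}}{(q)_{n_1}\dotsm(q)_{n_{2k}}},
\]
i.e.\ the level-one principal-subspace character for $A_{2k}^{(1)}$. So your first two displays are correct and need no further work beyond what is already in Proposition \ref{principal}.

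Where you and the paper part company is the last step. The paper does \emph{not} prove the conjecture in the body; Proposition \ref{O3} only checks agreement through the $q^2$ term. The resolution is recorded in the Remark immediately following Proposition \ref{O3}: the identification of the $A_{2k}^{(1)}$ principal-subspace character with $\mathrm{ch}[L_{\mathfrak{sp}(2k)}(\Lambda_0)]$ is a theorem of Stoyanovsky \cite{Stoyanovsky} (see also \cite{BW}), and combining it with the identity above yields the conjecture. So your assessment that ``no comparable Nahm-sum formula \dots\ appears in the literature'' is the one inaccuracy --- that formula is exactly Stoyanovsky's result, and neither of your proposed routes (a) or (b) is needed. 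In short: your strategy is the paper's strategy, and the ``genuinely new fermionic character formula'' you say is required already exists.
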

Now we provide evidence in support of Conjecture (\ref{C-character}).
\begin{proposition} \label{O3}
For $k\ge1$, we have
$$F_k(q)=1+(2k+1)k q+ \frac{1}{3} k(3+5k+4k^3) q^2 +O(q^3).$$
\end{proposition}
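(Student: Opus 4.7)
The plan is to expand both sides of the defining relation \eqref{C-series} for $F_k(q)$ modulo $q^3$ and compare. Writing
$$S_k(q):=\sum_{n_1,\dots,n_{2k+1}\ge 0}\frac{q^{\sum_{i=1}^{2k+1} n_i^2-\sum_{i=1}^{2k} n_i n_{i+1}}}{(q)_{n_1}^2\cdots (q)_{n_{2k+1}}^2}$$
for the multisum on the left, we have $F_k(q)=(q)_\infty^{2k+1}S_k(q)$. From $(q)_\infty=1-q-q^2+O(q^5)$ the binomial theorem gives
$$(q)_\infty^{2k+1}=1-(2k+1)q+(2k+1)(k-1)q^2+O(q^3).$$

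To compute $S_k(q)\pmod{q^3}$, I would classify tuples $\vec n=(n_1,\dots,n_{2k+1})\in\N_0^{2k+1}$ by the exponent $E(\vec n):=\sum n_i^2-\sum n_i n_{i+1}$ and only keep $E\le 2$. The key tool is the sum-of-squares identity
$$2E(\vec n)=n_1^2+n_{2k+1}^2+\sum_{i=1}^{2k}(n_i-n_{i+1})^2,$$
which forces each of the $2k+2$ squared summands into $\{0,1,4\}$. A configuration with a single summand equal to $4$ would require a $\pm 2$ jump on an otherwise constant path starting and ending at $0$, which is impossible; hence every relevant $\vec n$ lies in $\{0,1\}^{2k+1}$ and may be viewed as a nonnegative lattice path with unit steps.

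For $E=1$, the support of $\vec n$ is a single block of $m$ consecutive $1$'s, of which there are $2k+2-m$. For $E=2$, splitting by $(n_1,n_{2k+1})\in\{0,1\}^2$ gives four subcases; Dyck/ballot-type enumeration of the admissible unit-jump orderings yields $2\binom{2k}{2}$, $2\binom{2k}{3}$, $2\binom{2k}{3}$, $2\binom{2k}{4}$ tuples respectively, which sum by Pascal's rule to $2\binom{2k+2}{4}$. Each $E=1$ tuple with $m$ ones contributes $q/(1-q)^{2m}\equiv q+2mq^2\pmod{q^3}$, while each $E=2$ tuple contributes $q^2\pmod{q^3}$. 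Combining these via $\sum_{m=1}^{2k+1}(2k+2-m)=(2k+1)(k+1)$ and $\sum_{m=1}^{2k+1}m(2k+2-m)=(2k+1)(2k+2)(2k+3)/6$ produces
$$S_k(q)\equiv 1+(2k+1)(k+1)\,q+\tfrac{(2k+1)(k+1)(2k^2+3k+6)}{3}\,q^2\pmod{q^3}.$$

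Multiplying $(q)_\infty^{2k+1}$ and $S_k(q)$ and simplifying the $q^2$-coefficient yields $\tfrac{k(2k+1)(2k^2-k+3)}{3}=\tfrac{1}{3}k(4k^3+5k+3)$, matching the claim, and the $q$-coefficient reduces to $k(2k+1)$. The main obstacle is the careful enumeration of $E=2$ lattice paths under the non-negativity constraint; it is precisely the sum-of-squares reformulation that turns this from an unwieldy direct expansion of a $(2k+1)$-fold series into a short finite case analysis.
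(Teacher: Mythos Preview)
Your overall strategy matches the paper's: expand the multisum $S_k(q)$ to order $q^2$ by classifying tuples with $E(\vec n)\le 2$, then multiply by $(q)_\infty^{2k+1}$. Your sum-of-squares rewriting $2E=n_1^2+n_{2k+1}^2+\sum_i(n_i-n_{i+1})^2$ is a clean way to recast the solutions as unit-step lattice paths; the paper instead classifies the $E=2$ solutions directly by maximum entry ($\max n_i\le 1$ versus $\max n_i=2$), but both routes arrive at $2\binom{2k+2}{4}$ solutions, and the remaining arithmetic agrees.

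There is, however, one genuine misstatement. The assertion ``hence every relevant $\vec n$ lies in $\{0,1\}^{2k+1}$'' is false. Ruling out a summand equal to $4$ only forces $n_1,\,n_{2k+1}\in\{0,1\}$ and $|n_i-n_{i+1}|\in\{0,1\}$; it does \emph{not} bound the intermediate $n_i$ by $1$. For instance $(0,1,2,1,0,\dots,0)$ has $E=2$ and reaches height $2$; these are precisely the paper's type-(b) solutions $(0,\dots,0,1,\dots,1,2,\dots,2,1,\dots,1,0,\dots,0)$. Fortunately your subsequent enumeration never uses the false claim: you count nonnegative unit-step paths with prescribed endpoints, and in the $(n_1,n_{2k+1})=(0,0)$ subcase the admissible ordering $(+,+,-,-)$ is exactly the height-$2$ profile. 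So your count $2\binom{2k}{2}+2\cdot 2\binom{2k}{3}+2\binom{2k}{4}=2\binom{2k+2}{4}$ and the final coefficients are correct; simply replace the offending clause with the correct deduction that $\vec n$ is a nonnegative lattice path with steps in $\{0,\pm1\}$ and endpoints in $\{0,1\}$.
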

\begin{proof} We first let
$$\sum_{n_1,...,n_{2k+1} \geq 0 } \frac{q^{\sum_{i=1}^{2k+1} n_i^2-\sum_{i=1}^{2k} n_{i}n_{i+1}}}{(q)_{n_1}^2 (q)_n^2 \cdots (q)_{n_{2k+1}}^2}
=1+ d_k q+ a_k q^2 + O(q^3).$$

The linear term is clear, it simply counts the number of positive roots in the root system of type $A_{2k+1}$. 
Put differently, it counts the number of positive integral solutions of 
\begin{equation} \label{Equal.1}
\sum_{i=1}^{2k+1} n_i^2-\sum_{i=1}^{2k} n_{i}n_{i+1}=1.
\end{equation}
These solutions are exactly of the form $(n_1,...,n_{2k+1})=(0,..,0,1,...,1,0,..,0)$,
and it is easily seen that the number of them is $d_k=(2k+1)(k+1)$.

We claim that $a_k=\frac{1}{3}(1+k)(1+2k)(6+3k+2k^2)$.
For this, we first note that the contribution from the quadratic term comes from two sources,
specifically from the terms $\frac{q}{(q)_{n_1}^2 \cdots (q)^2_{n_{2k+1}}}$ and 
$\frac{q^2}{(q)_{n_1}^2 \cdots (q)^2_{n_{2k+1}}}$. As such we have to analyze non-negative integral solutions of
(\ref{Equal.1})
and of
\begin{equation} \label{Equal.2}
\sum_{i=1}^{2k+1} n_i^2-\sum_{i=1}^{2k} n_{i}n_{i+1}=2.
\end{equation}
Clearly the solutions of (\ref{Equal.1}) contribute a total of  $\sum_{0 \leq j < i \leq 2k+1} 2(i-j)=\frac{2}{3} (k+1)(1+2k)(3+2k)$
to the quadratic coefficient.
For the second equation, no solution has $n_i \geq 3$, and so we may assume $0 \leq n_i \leq 2$. 
Next we consider two types of solutions: (a) solutions with all $n_i \leq 1$, and (b) solutions with at least one $n_i=2$.
In case (a), we see that in each such solution there must be exactly two substrings of $1$s:
\begin{equation} \label{gen-sol}
(0,...,0,1,...,1,0,...,0,1,...,1,0,...,0).
\end{equation} 
For (b), each solution takes form: 
$$(0,..,0,{1,...1,{2,...,2},1,....,1},0,..,0).$$ 
In both cases the initial or terminal subsequence of 0s can be empty.
For either (a) and (b), it is easy to see combinatorially that we have precisely ${(2k+1)+1 \choose 4}$ solutions.
Therefore, altogether there are 
$$2 {2k+2 \choose 4 } + \frac{2}{3} (k+1)(1+2k)(3+2k)=\frac{1}{3}(1+k)(1+2k)(6+3k+2k^2)$$
solutions. Next we note $\frac{1}{(q)^{2k+1}_\infty}=1+(2k+1)q+(2+5k+2k^2)q^2+O(q^3)$
and we write 
$$\sum_{n_1,n_2,\dotsc,n_{2k+1} \geq 0 } 
\frac{q^{\sum_{i=1}^{2k+1} n_i^2-\sum_{i=1}^{2k} n_{i}n_{i+1}}}
{(q)_{n_1}^2 (q)_n^2 \cdots (q)_{n_{2k+1}}^2}=(1+(2k+1)q+(2+5k+2k^2)q^2+\cdots) F_k(q).$$
Expanding  $F(q)=1+a q+bq^2+O(q^3)$ and 
and solving for $a$ and $b$ gives $a=(2k+1)k$ and $b=\frac{1}{3}(3k+5k^2+4k^4)$ as claimed.
\end{proof}
Since this is in agreement with the known properties of ${\rm ch}[L_{\frak{sp}(2k)}(\Lambda_0)](\tau)$ (see \cite{Primc}), 
our conjecture is valid $O(q^3)$.

\begin{remark} After this work was completed, we became aware of a paper by A.V. Stoyanovsky \cite{Stoyanovsky} which identifies the character of the principal subspace for $A_{2k}^{(1)}$ at level one and the character of $L_{\frak{sp}(2k)}(\Lambda_0)$. This result together with the main identity in the proof of Proposition \ref{principal} implies our Conjecture \ref{C-character}. Stoyanovsky's identity was also discussed in \cite{BW} in connection to 
Hall-Littellwood polynomials.
\end{remark}

\section{Identities for Nahm-type sums with higher order poles}

Our interest is identities for $q$-hypergeometric multi-sums of the form
\begin{gather*}
F(r_1,r_2,\dotsc,r_k)
:=
\sum_{n_1,n_2,\dotsc,n_k\ge0}
\frac{q^{n_1+n_2+\dotsb+n_k+n_1n_2+n_2n_3+\dotsb+n_{k-1}n_k}}
{(q)_{n_1}^{r_1} (q)_{n_2}^{r_2} \dotsm (q)_{n_k}^{r_k} }
,
\end{gather*}
where each $r_i\ge1$ and $r_1+r_2+\dotsb+r_k\le 2k$. While these sums
are too general for us to form a single coherent conjecture, we do
see a large number of identities for small $k$. In particular, 
all of the following are either known or easy to prove:
\begin{proposition} We have,
\begin{align*}
F(1,1)
&=
	\frac{1}{(1-q)(q)_\infty}
,\qquad
F(1,2)
=
	\frac{1}{(q)_\infty^2}
,\qquad
F(1,3)
=
	\frac{\sum_{n \in \mathbb{Z}} \sgn(n) q^{2n^2+n}}{(q)^3_\infty}
,\\
F(2,2)
&=
	\frac{(q,q^4,q^5;q^5)}{(q)_\infty^3}
,\hspace{1em}
F(1,1,1)
=
	\frac{q^{-1}\left(1-(q;q)_\infty \right)}{(q)_\infty^2}
,\hspace{1em}
F(1,1,2)
=
	\frac{\sum_{n \geq 0} (-1)^n q^{n(n+3)/2}}{(q)_\infty^3}
,\\
F(1,2,1)
&=
	\frac{1}{(1-q) (q)_\infty^2}
,\qquad
F(1,2,2)
=
	\frac{1}{(q)_\infty^3}
,\qquad
F(1,3,1)
=
	\frac{1}{(q)_\infty^3}
,\\
F(1,2,3)
&=
	\frac{\sum_{n\in\Z}\hspace{-0.05em} \sgn(n) q^{2n^2+n}}
	{(q)_\infty^4}
,\hspace{0.4em}
F(1,3,2)
=
	\frac{(q,q^4,q^5;q^5)}{(q)_\infty^4}
,\hspace{0.4em}
F(1,4,1)
=
	\frac{\sum_{n \in \mathbb{Z}}\hspace{-0.05em} \sgn(n) q^{2n^2+n}}{(q)_\infty^4}
,\\
F(2,1,2)
&=
	\frac{1+2\sum_{n\ge1}(-1)^nq^{\frac{n(n+1)}{2}}}
	{(1-q)(q)_\infty^4}
,\qquad
F(2,2,2)
=
	\frac{\sum_{n \in \mathbb{Z}} \sgn(n)q^{3n^2+2n}}{(q)_\infty^4}.
\end{align*}
\end{proposition}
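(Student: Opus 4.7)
The plan is to prove each identity individually, since the twelve right-hand sides do not share a single algebraic structure. The uniform first step in every case is to collapse one or two inner Pochhammer factors by summing a single variable via Euler's identity \eqref{EqEuler}, since the $q$-exponent is bilinear in the $n_i$ so that each one-variable sum takes the form $\sum_n z^n/(q)_n = 1/(z)_\infty$.

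After this reduction the identities split into three families. The eta-quotient cases $F(1,1)$, $F(1,2)$, $F(1,2,1)$, $F(1,2,2)$, $F(1,3,1)$ reduce to a geometric outer sum or collapse after a second application of \eqref{EqEuler}; the case $F(1,1,1)$ further needs the telescoping identity $q\sum_{m\ge 0}q^m(q)_m = 1-(q)_\infty$, which is immediate from $(q)_{m-1}-(q)_m = q^m(q)_{m-1}$. The modular-product cases $F(2,2)$ and $F(1,3,2)$ are essentially Theorem \ref{euler-modular-p} at $k=1$: indeed $F(2,2)$ is exactly the $k=1$ case of that theorem, and $F(1,3,2)$ reduces to $(q)_\infty^{-1}F(2,2)$ after one Euler summation on $n_1$. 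The false-theta cases $F(1,3)$, $F(1,4,1)$, $F(1,2,3)$, $F(2,2,2)$ similarly reduce via one or two Euler summations to Theorem \ref{euler-false-p} at $k=1$ or $k=2$ multiplied by an appropriate power of $(q)_\infty^{-1}$; for example, summing $n_1$ in $F(1,3)$ collapses it to $(q)_\infty^{-1}$ times the $k=1$ case of Theorem \ref{euler-false-p}, and the same trick handles $F(1,4,1)$ and $F(1,2,3)$.

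The main obstacle is the pair $F(1,1,2)$ and $F(2,1,2)$, whose right-hand sides are partial theta series --- respectively $\sum_{n\ge 0}(-1)^n q^{n(n+3)/2}$ and the Gaussian partial theta $1 + 2\sum_{n\ge 1}(-1)^n q^{n(n+1)/2}$ --- neither of which can be obtained by specializing Theorems \ref{euler-false-p} or \ref{euler-modular-p}, since those only produce two-sided false theta functions or infinite products. For both identities the plan is the same: perform a preliminary Euler summation on the middle variable to reduce to a sum in fewer variables, for instance
\begin{gather*}
F(2,1,2) = \frac{1}{(q)_\infty}\sum_{n_1,n_3\ge 0}\frac{q^{n_1+n_3}(q)_{n_1+n_3}}{(q)_{n_1}^2(q)_{n_3}^2},
\end{gather*}
and then apply one of Heine's transformations \eqref{EqHeinesTranformation1} or \eqref{EqHeinesTranformation2} to extract a one-dimensional partial theta from the resulting $q$-binomial evaluation. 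This is the one place where the constant-term/Bailey machinery of Section 4 does not apply directly, since the numerator is one-sided rather than signed by $\sgn(n)$ across all of $\Z$.
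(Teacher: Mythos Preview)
Your plan is correct and matches the paper's proof almost exactly: Euler's identity \eqref{EqEuler} to collapse outer variables, reduction of the product/false-theta cases to Theorems \ref{euler-modular-p} and \ref{euler-false-p} at $k=1,2$, and Heine's transformation for $F(1,1,2)$ and $F(2,1,2)$ (the paper uses \eqref{EqHeinesTranformation1} twice for $F(2,1,2)$, just as you outline). The only notable deviation is $F(1,1,1)$: the paper reduces to $\tfrac{1}{(q)_\infty^2}\sum_{n\ge0}(q)_n q^n$ and then applies Heine \eqref{EqHeinesTranformation1} with $a=b=z=q$, $c=0$, whereas your telescoping argument $q\sum_{m\ge0}q^m(q)_m=1-(q)_\infty$ reaches the same endpoint more directly and avoids Heine altogether.
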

\begin{proof}
By \eqref{EqEuler}, the following two identities hold,
\begin{gather*}
F(1,1)
=
	\frac{1}{(1-q)(q)_\infty}
,\qquad\qquad
F(1,2)
=
	\frac{1}{(q)_\infty^2}
.
\end{gather*}
The identity for $F(2,2)$ is simply the $k=1$ case of Theorem \ref{euler-modular-p}.
For $F(1,3)$, we begin with \eqref{EqEuler} and find that
\begin{align*}
F(1,3)
&=
	\frac{1}{(q)_\infty}\sum_{n\ge0}\frac{q^{n}}{(q)_n^2}
	=
	\frac{1}{(q)_\infty^3}\sum_{n\ge0}(-1)^nq^{\frac{n(n+1)}{2}}
	=
	\frac{1}{(q)_\infty^3}\sum_{n\in\Z}\sgn(n)q^{n(2n+1)}
,
\end{align*}
where the second equality follows from \eqref{EqHeinesTranformation1}
with $a=b=0$ and $c=z=q$.

Additional usage of \eqref{EqEuler} then yields
\begin{flalign*}
&F(1,2,1)
= 
	\frac{F(1,1)}{(q)_\infty}
	=
	\frac{1}{(1-q)(q)_\infty^2}
,&&
F(1,2,2)
=
	\frac{F(1,2)}{(q)_\infty}
	=
	\frac{1}{(q)_\infty^3}
,\\
&F(1,3,1)
=
	\frac{F(1,2)}{(q)_\infty}
	=
	\frac{1}{(q)_\infty^3}
,&&
F(1,2,3)
=
	\frac{F(1,3)}{(q)_\infty}
	=
	\frac{1}{(q)_\infty^4} \sum_{n\in\Z}\sgn(n)q^{2n^2+n}
,\\
&F(1,3,2)
=
	\frac{F(2,2)}{(q)_\infty}
	=
	\frac{(q,q^4,q^5;q^5)_\infty}{(q)_\infty^4}
,&&
F(1,4,1)
=
	\frac{F(3,1)}{(q)_\infty}
	=
	\frac{1}{(q)_\infty^4} \sum_{n\in\Z}\sgn(n)q^{2n^2+n}
.
\end{flalign*}
The identity for $F(2,2,2)$ is Theorem \ref{euler-false-p} with $k=2$.
For $F(1,1,1)$, we begin with two applications of \eqref{EqEuler} 
and then apply \eqref{EqHeinesTranformation1} with $a=b=z=q$ and $c=0$,
which is
\begin{align*}
F(1,1,1)
&=
	\frac{1}{(q)_\infty^2}
	\sum_{n\ge0}(q)_nq^n
=
	\frac{(q^2)_\infty}{(q)_\infty^2}
	\sum_{n\ge0}\frac{q^n}{(q^2)_n}
=
	\frac{1}{(q)_\infty}
	\sum_{n\ge1}\frac{q^{n-1}}{(q)_n}
=
	\frac{q^{-1}}{(q)_\infty}
	\left(
		\frac{1}{(q)_\infty}-1
	\right)
.
\end{align*}
For $F(1,1,2)$, we begin with \eqref{EqEuler} and end with \eqref{EqHeinesTranformation1},
\begin{align*}
F(1,1,2)
&=
	\frac{1}{(q)_\infty}
	\sum_{n_2,n_3\ge0}
	\frac{q^{n_2+n_3+n_2n_3}}{(q)_{n_3}^2}
=
	\frac{1}{(q)_\infty}
	\sum_{n\ge0}
	\frac{q^{n}}{(q)_n^2(1-q^{n+1})}
\\
&=
	\frac{1}{(1-q)(q)_\infty}
	\sum_{n\ge0}
	\frac{q^{n}}{(q^2,q)_n}
=
	\frac{1}{(q)_\infty^3}
	\sum_{n\ge0}
	(-1)^n q^{\frac{n(n+3)}{2}}
.
\end{align*}
The identity for F(2,1,2) is slightly more involved in that it requires
two applications of Heine's transformation. In particular, starting with
\eqref{EqEuler},
\begin{align*}
F(2,1,2)
&=
	\sum_{n_1,n_3\ge0}
	\frac{q^{n_1+n_3}}{(q)_{n_1}^2(q)_{n_3}^2(q^{1+n_1+n_3})_\infty}
=
	\frac{1}{(q)_\infty}
	\sum_{n,m\ge0}
	\frac{(q)_{n+m} q^{n+m} }{(q)_{n}^2(q)_{m}^2}
\\
&=	
	\frac{1}{(q)_\infty}
	\sum_{n\ge0} \frac{q^n}{(q)_n}
	\sum_{m\ge0} \frac{(0,q^{n+1})_m q^m}{(q)_m^2}
\overset{\eqref{EqHeinesTranformation1}}{=}
	\frac{1}{(q)_\infty^2}
	\sum_{n,m\ge0}
	\frac{(q^{-n})_m q^{n+m+mn}}{(q)_n^2}
\\
&=
	\frac{1}{(q)_\infty^2}
	\sum_{n,m\ge0}
	\frac{(-1)^m q^{n+\frac{m(m+1)}{2}}}{(q)_n(q)_{n-m}}
=
	\frac{1}{(q)_\infty^2}
	\sum_{n,m\ge0}
	\frac{(-1)^m q^{n+\frac{m(m+3)}{2}}}{(q)_n(q)_{n+m}}
\\
&=
	\frac{1}{(q)_\infty^2}
	\sum_{m\ge0}
	\frac{(-1)^m q^{\frac{m(m+3)}{2}}}{(q)_{m}}
	\sum_{n\ge0}
	\frac{q^n}{(q^{m+1},q)_n}
\\
&\overset{\eqref{EqHeinesTranformation1}}{=}
	\frac{1}{(q)_\infty^4}
	\sum_{n,m\ge0}
	(-1)^{n+m} q^{\frac{n(n+1)}{2}+nm+\frac{m(m+3)}{2}}
.
\end{align*}
However,
\begin{align*}
&(1-q)\sum_{n,m\ge0}
	(-1)^{n+m} q^{\frac{n(n+1)}{2}+nm+\frac{m(m+3)}{2}}
\\
&=
	\sum_{\substack{n\ge1\\ m\ge0}}
	(-1)^{n+m+1} q^{\frac{n(n-1)}{2}+nm+\frac{m(m+1)}{2}}
	-
	\sum_{\substack{n\ge0\\ m\ge1}}
	(-1)^{n+m+1} q^{\frac{n(n-1)}{2}+nm+\frac{m(m+1)}{2}}
\\
&=
	\sum_{n\ge1}
	(-1)^{n+1} q^{\frac{n(n-1)}{2}}
	-
	\sum_{m\ge1}
	(-1)^{m+1} q^{\frac{m(m+1)}{2}}
=
	1+2\sum_{n\ge1}(-1)^nq^{\frac{n(n+1)}{2}}
,
\end{align*}
so that
\begin{gather*}
F(2,1,2)
=
	\frac{1}{(1-q)(q)_\infty^4}
	\left(1+2\sum_{n\ge1}(-1)^nq^{\frac{n(n+1)}{2}}\right)
.
\end{gather*}
\end{proof}
Note that $F(1,1,3)$, $F(1,1,4)$, and $F(2,1,3)$ are missing in
the identities above.

Many of the identities for $F(r_1,r_2,r_3)$ follow from identities
for $F(r_1,r_2)$. This is because \eqref{EqEuler} gives that
$F(1,r_2,\dotsc,r_k) = \frac{1}{(q)_\infty}F(r_2-1,r_3,\dotsc,r_k)$, 
and trivially $F(r_1,r_2,\dotsc,r_k)=F(r_k,\dotsc,r_2,r_1)$.
Thus each identity for $F(1,r_2,\dotsc,r_k)$ yields an identity
for $F(1,r_2+1,r_3,\dotsc,r_k)$. In the following proposition,
we record one particularly simple form of this iteration.
\begin{proposition} For $k\ge2$,
\begin{gather*}
\sum_{n_1,n_2,\dotsc,n_k \geq 0} 
\frac{q^{n_1+n_2+\dotsb+n_k+n_1n_2+n_2n_3+\dotsb+n_{k-1}n_k}}
{(q)_{n_1} (q)_{n_2}^{2} (q)_{n_3}^{2} \dotsm (q)_{n_{k-1}}^2 (q)_{n_k}}
=
	\frac{1}{(1-q)(q)_\infty^{k-1}}.
\end{gather*}
\end{proposition}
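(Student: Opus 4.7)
The plan is to establish the identity by induction on $k\ge 2$, using Euler's identity \eqref{EqEuler} to collapse the sum one index at a time. The base case $k=2$ is the identity $F(1,1)=\frac{1}{(1-q)(q)_\infty}$ already recorded in the preceding proposition, so nothing new is required there.

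For the inductive step, I would strip off the $n_1$ sum. Since $n_1$ appears in the exponent only through $n_1(1+n_2)$ and in the denominator only as $(q)_{n_1}$, a direct application of \eqref{EqEuler} yields
\begin{equation*}
\sum_{n_1\ge 0}\frac{q^{n_1(1+n_2)}}{(q)_{n_1}}
=\frac{1}{(q^{1+n_2})_\infty}
=\frac{(q)_{n_2}}{(q)_\infty}.
\end{equation*}
The factor $\frac{1}{(q)_\infty}$ pulls out of the remaining multisum, while the surviving $(q)_{n_2}$ cancels one of the two copies of $(q)_{n_2}$ in the denominator. What remains is a $(k-1)$-fold sum in the variables $n_2,\dots,n_k$ with denominator $(q)_{n_2}(q)_{n_3}^2\cdots(q)_{n_{k-1}}^2(q)_{n_k}$ and exponent $n_2+\dots+n_k+n_2n_3+\dots+n_{k-1}n_k$, which is exactly the shape of the statement with $k$ replaced by $k-1$. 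The inductive hypothesis then gives $\frac{1}{(q)_\infty}\cdot\frac{1}{(1-q)(q)_\infty^{k-2}}=\frac{1}{(1-q)(q)_\infty^{k-1}}$, as required.

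I do not anticipate any substantive obstacle: the powers in the denominator are calibrated so that a single application of \eqref{EqEuler} reduces the number of summation indices by one without leaving behind any extraneous factor. The doubled $(q)_{n_j}^2$ at interior positions together with the single $(q)_{n_1}$ and $(q)_{n_k}$ at the endpoints are exactly what make this telescoping work; equivalently, one can apply the same reduction to $n_k$ by the palindromic symmetry of the summand.
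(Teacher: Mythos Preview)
Your proposal is correct and is essentially the same argument the paper has in mind: the paper observes just before this proposition that \eqref{EqEuler} gives $F(1,r_2,\dotsc,r_k)=\frac{1}{(q)_\infty}F(r_2-1,r_3,\dotsc,r_k)$, and the proposition is presented as the simplest iteration of this reduction starting from $F(1,1)=\frac{1}{(1-q)(q)_\infty}$. Your inductive stripping of the $n_1$-sum via \eqref{EqEuler} is exactly this.
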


\section{Relations with Sum of Tails Identities}

Continuing from the previous section, we focus on the case $n_1=n_2=\cdots=n_k=1$, where 
$4 \leq k \leq 8$. Here we see certain Lambert series, quantum modular forms, and quasi-modular forms can appear. 
We have identities for $k=4$, $5$, and $6$, and conjectures for $k=7$ and $8$.

\subsection{$k=4$}

Using \eqref{EqEuler}, it is easy to show that
\begin{gather*}
\sum_{n_1,n_2,n_3,n_4\ge0}
\frac{q^{n_1+n_2+n_3+n_4+n_1n_2+n_2n_3+n_3n_4}}{(q)_{n_1}(q)_{n_2}(q)_{n_3}(q)_{n_4}}
=
\frac{1}{(q)_\infty^2}\sum_{n_2,n_3\ge0} q^{n_2+n_3+n_2n_3}
=
\frac{q^{-1}}{(q)_\infty^2}\sum_{n\ge1}\frac{q^n}{1-q^n}.
\end{gather*}
To see the connection with the identities for $k\ge5$ and 
sums of tails identities, we note that we can also write 
\begin{gather*}
\frac{1}{(q)_\infty}\sum_{n\ge1}\frac{q^n}{1-q^n}
=
\sum_{n \geq 0} \left(\frac{1}{(q)_\infty}-\frac{1}{(q)_n}\right)
=
\frac{1}{(q)_\infty} -1 +\sum_{n \geq 1} \left(\frac{1}{(q)_\infty}-\frac{1}{(q)_n}\right).
\end{gather*}

\subsection{$k=5$}

Again by \eqref{EqEuler}, we have
\begin{gather*}
\sum_{n_1,n_2,n_3,n_4,n_5\ge0}
\frac{q^{n_1+n_2+n_3+n_4+n_5+n_1n_2+n_2n_3+n_3n_4+n_4n_5}}{(q)_{n_1}(q)_{n_2}(q)_{n_3}(q)_{n_4}(q)_{n_5}}
=
\frac{1}{(q)_\infty^2}\sum_{n\ge0}\frac{q^n}{(q)_{n}(1-q^{n+1})^2}.
\end{gather*}
We note that the series has a straight-forward combinatorial interpretation.
In particular,
\begin{gather*}
\sum_{n\ge0}\frac{q^{n+1}}{(q)_{n}(1-q^{n+1})^2}
=
\sum_{n\ge0} t(n)q^n,
\end{gather*}
where $t(n)$ is the sum of the numbers of times that the
largest part appears in each partition of $n$, which is reminiscent
of Andrews celebrated smallest parts partition function \cite{Andrews3}.
The function $t(n)$ was studied for
its asymptotic properties in \cite{GrabnerKnopfmacher1}

It turns out this series has a much more interesting representation,
\begin{align*}
&\frac{1}{(q)_\infty^2}\sum_{n\ge0}\frac{q^n}{(q)_n(1-q^{n+1})^2}
=
	\frac{q^{-1}}{(q)_\infty^2}
	\sum_{\substack{n\ge1\\ m\ge0}}
	\frac{q^{n+nm}}{(q)_{n}}
=
	\frac{q^{-1}}{(q)_\infty^2}
	\sum_{m\ge0}
	\left(
		-1
		+
		\frac{1}{(q^{1+m})_\infty}
	\right)
\\
&=
	\frac{q^{-1}}{(q)_\infty^3}
	\sum_{m\ge0}
	\left(
		(q)_m
		-
		(q)_\infty
	\right)
=
	-\frac{q^{-1}}{2(q)_\infty^3}
	\sum_{n\ge1} 
	n\left(\tfrac{12}{n}\right)q^{\frac{n^2-1}{24}}
	-
	\frac{q^{-1}}{(q)_\infty^2}\sum_{n\ge1}
	\frac{q^n}{1-q^n}
	+
	\frac{q^{-1}}{2(q)_\infty^2}
,
\end{align*}
where the final equation follows by Theorem 2 of \cite{Zagier1}
and the discussion leading up to it. We note that upon ignoring
the factor of $\frac{q^{-1}}{(q)_\infty^3}$, this final series
was an essential component in Zagier's study \cite{Zagier1} of 
a ``strange identity'' for Kontsevich's function  and Zagier's construction
of prototypical examples of quantum modular forms \cite{Zagier2}.

\subsection{$k=6$}

In this case, we find that
\begin{gather*}
\sum_{n_1,n_2,n_3,n_4,n_5,n_6\ge0}
\frac{q^{n_1+n_2+n_3+n_4+n_5+n_6+n_1n_2+n_2n_3+n_3n_4+n_4n_5+n_5n_6}}
{(q)_{n_1}(q)_{n_2}(q)_{n_3}(q)_{n_4}(q)_{n_5}(q)_{n_6}}
=
\frac{1}{(q)_\infty^2}\sum_{n,m\ge0}\frac{q^{n+m+nm}}{(q)_{n+1}(q)_{m+1}}.
\end{gather*}
We mention in passing that the double sum appears to have a partition theoretic interpetation
and matches entry A179862 of OEIS. More interesting is how this series reduces.
Again by \eqref{EqEuler}, we have 
\begin{align*}
&\frac{1}{(q)_\infty^2}\sum_{n,m\ge0}\frac{q^{n+m+nm}}{(q)_{n+1}(q)_{m+1}}
=
	\frac{q^{-1}}{(q)_\infty^2}\sum_{n,m\ge1}\frac{q^{nm}}{(q)_{n}(q)_{m}}
=
	\frac{q^{-1}}{(q)_\infty^2}\sum_{n\ge1}
	\frac{1}{(q)_n}
	\left(	
		\frac{1}{(q^n)_{\infty}} - 1
	\right)
\\
&=
	\frac{q^{-1}}{(q)_\infty^2}\sum_{n\ge1}
	\left(
 		\frac{1}{(q)_\infty}
		-
		\frac{1}{(q)_n}
		+
 		\frac{q^n}{(1-q^n)(q)_\infty}
	\right)
=
	\frac{q^{-1}}{(q)_\infty^3}
	\sum_{n\ge1}
 		\frac{q^n}{1-q^n}
 	+
	\frac{q^{-1}}{(q)_\infty^2}\sum_{n\ge1}
	\left(
 		\frac{1}{(q)_\infty}
		-
		\frac{1}{(q)_n}
	\right)
\\
&=
	\frac{q^{-1}}{(q)_\infty^3}
	\sum_{n\ge1}
 		\frac{q^n}{1-q^n}
	-
	\frac{q^{-1}}{(q)_\infty^2}\left(\frac{1}{(q)_\infty}-1\right)	
	+
	\frac{q^{-1}}{(q)_\infty^2}\sum_{n\ge0}
	\left(
 		\frac{1}{(q)_\infty}
		-
		\frac{1}{(q)_n}
	\right)
\\
&=
	\frac{2q^{-1}}{(q)_\infty^3}
	\sum_{n\ge1}
 		\frac{q^n}{1-q^n}
	-
	\frac{q^{-1}}{(q)_\infty^3}
	+
	\frac{q^{-1}}{(q)_\infty^2}
,
\end{align*}
where the final equality follows directly from 
Theorem 2 of \cite{AndrewsJimenezUrrozOno1} with 
$a=0$ and $b=c$.

\subsection{$k=7$}
Here we have a conjectural identity:
\begin{gather*}
\sum_{n_1,n_2,n_3,n_4,n_5,n_6, n_7 \ge0}
\frac{q^{n_1+n_2+n_3+n_4+n_5+n_6+n_7+n_1n_2+n_2n_3+n_3n_4+n_4n_5+n_5n_6+n_6 n_7}}
{(q)_{n_1}(q)_{n_2}(q)_{n_3}(q)_{n_4}(q)_{n_5}(q)_{n_6}(q)_{n_7}} \\
=\frac{q^{-1}}{(1-q)(q)_\infty^4} \left(\sum_{m \geq 1} (-3m+1)(-1)^m q^{\frac{3m^2+m}{2}} 
+ \sum_{m \leq -1} (3m+2)(-1)^m q^{\frac{3m^2+m}{2}}\right).
\end{gather*}
The infinite series on the right-hand side is a quantum modular form.
We also give another conjectural identity in the form of sum of tails  
$$
\sum_{n_1,n_2,n_3,n_4,n_5,n_6, n_7 \ge0}
\frac{q^{n_1+n_2+n_3+n_4+n_5+n_6+n_7+n_1n_2+n_2n_3+n_3n_4+n_4n_5+n_5n_6+n_6 n_7}}
{(q)_{n_1}(q)_{n_2}(q)_{n_3}(q)_{n_4}(q)_{n_5}(q)_{n_6}(q)_{n_7}}$$
$$=\frac{q^{-1}}{(1-q)(q)_\infty^4} \left( -1+ \sum_{n \geq 1} \frac{q^n}{1-q^n} (q)_\infty + \sum_{n \geq 0} ((q)_n -(q)_\infty) + (q)_\infty \right).$$
%}

%We can also write the function in the parentheses as  
%$$
%	-1-\frac{\partial}{\partial x} \Bigg(
%		\underbrace{\sum_{n \geq 0} x^{3n-1} q^{\frac{n(3n+1)}{2}} (1-x^2 q^{2n+1})}_{:=\psi(x,q)} 
%	\Bigg)_{x=-1}.$$
%Then we have one of Ramanujan's famous identities
%$$\psi(x,q)=\sum_{n \geq 0} \frac{(-1)^n x^{2n-1} q^{\frac{n(n+1)}{2}}}{(-xq)_n}.$$
%\color{red} We could formulate our conjecture differently (this version looks more like "sum of tails" and $k=5$): 
%$$\frac{q^{-1}}{(1-q)(q)_\infty^4} \left( -1- \sum_{n \geq 1} \frac{q^n}{1-q^n} (q)_\infty + \sum_{n \geq 0} ((q)_n -(q)_\infty) %+ (q)_\infty \right).$$
%\color{black}
%{\bf CJS: I would be fine with that and I do think it would match the other conjectures and identities a little better. There %is a small
%sign error and this should be:
%Another way of formulating our conjecture using sum of tails is
%$$\frac{q^{-1}}{(1-q)(q)_\infty^4} \left( -1+ \sum_{n \geq 1} \frac{q^n}{1-q^n} (q)_\infty + \sum_{n \geq 0} ((q)_n -%(q)_\infty) + (q)_\infty \right).$$
%}

\subsection{$k=8$}

Lastly we offer the following conjectural identity,
\begin{gather*}
\sum_{n_1,n_2,n_3,n_4,n_5,n_6, n_7,n_8 \ge0}
\frac{q^{n_1+n_2+n_3+n_4+n_5+n_6+n_7+n_8+n_1n_2+n_2n_3+n_3n_4+n_4n_5+n_5n_6+n_6 n_7+n_7 n_8}}
{(q)_{n_1}(q)_{n_2}(q)_{n_3}(q)_{n_4}(q)_{n_5}(q)_{n_6} (q)_{n_7} (q)_{n_8}}
\\
=
	\frac{q^{-2}}{(q)_\infty^3}\left(
		\left(\frac{1}{(q)_\infty} -1 \right)
		\sum_{n \geq 1} \frac{nq^n}{1-q^n}  
	\right).
\end{gather*}

We leave it as an open question to determine the behavior for
general $k$.

\section{Principal subspaces and infinite jet schemes}
In this part we require some familiarity with vertex algebras (especially lattice vertex algebras) and principal subspaces
as developed in \cite{FS,FF, CLM,MP}. 

We first form a lattice vertex algebra $V_L$ on the integral lattice $L=\mathbb{Z}\beta_1 + \cdots + \mathbb{Z}\beta_k$, such that $(\beta_i,\beta_{i+1})=1$, for $i=1,...,k-1$, and 
zero otherwise. This is a non-degenerate even lattice for $k$ even. For $k$ odd it is degenerate with $1$-dimensional radical subspace. For simplicity of exposition we shall ignore this degeneracy and consider only $k$ even here. We consider the principal subspace \cite{MP} $$W_L=\langle e^{\beta_1},...,e^{\beta_k} \rangle \subset V_L$$ generated by $e^{\beta_i}$. Using tools of 
vertex algebras one can show that $W_L$ admits a nice monomial basis. For instance, for $k=2$, we get
$$v=    \beta_2(-j^{(2)}_1)\cdots \beta_2 (-j^{(2)}_{n_2})   \beta_1(-j^{(1)}_1)\cdots \beta_1 (-j^{(1)}_{n_{1}}),$$
where $j^{(i)}_{k} \geq 1$ and $j^{(2)}_i > n_{1}$. Defining ${\rm deg}(v):=\sum_{i=1}^{n_1} j^{(1)}_i+\sum_{i=1}^{n_2} j^{(2)}_i$, the character of $W_L$ can be computed directly from this basis as
$$\sum_{n_1,n_2 \geq 0} \frac{q^{(n_1+1)n_2}}{(q)_{n_2}} \frac{q^{n_1}}{ (q)_{n_1}},$$
which is precisely $F(1,1)$. More generally, results from \cite{MP,P} give: 
\begin{proposition} We have $F(\underbrace{1,...,1}_{{\rm k-times}})={\rm ch}[W_L](q)$.
\end{proposition}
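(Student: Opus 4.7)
The plan is to realize $\mathrm{ch}[W_L](q)$ as the graded dimension of a PBW-type monomial basis of $W_L$ (provided by the theory of \cite{MP, P}, building on \cite{CLM, FS, FF}) and then to match the resulting multisum term by term with $F(1,\dotsc,1)$. This is the natural generalization of the $k=2$ computation displayed just before the proposition statement.

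First I would read off the constraints on the monomial basis directly from the bilinear form on $L$: since $(\beta_i,\beta_i)=0$, there is no difference condition among the modes of a single $\beta_i$; since $(\beta_i,\beta_j)=0$ for $|i-j|\ge 2$, there is no condition between non-adjacent generators; and the pairings $(\beta_i,\beta_{i+1})=1$ produce exactly one cross-difference condition per adjacent pair. Consequently, a basis of $W_L$ is given by the ordered monomials
\[
\beta_k(-j^{(k)}_1)\dotsm \beta_k(-j^{(k)}_{n_k})\,\dotsm\,\beta_1(-j^{(1)}_1)\dotsm \beta_1(-j^{(1)}_{n_1}),
\]
with the weakly decreasing order $j^{(i)}_1\ge\dotsb\ge j^{(i)}_{n_i}\ge 1$ inside each block and the cross-condition $j^{(i+1)}_s>n_i$ for all $s$ across adjacent blocks.

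Second, the character follows by direct enumeration. Setting $n_0:=0$, the $\beta_i$-block contributes weakly decreasing sequences of $n_i$ integers $\ge n_{i-1}+1$; shifting each part by $-(n_{i-1}+1)$ converts this into an ordinary partition of length at most $n_i$, with generating function $q^{(n_{i-1}+1)n_i}/(q)_{n_i}$. Since the block constraints decouple, multiplying over $i$ and summing over $(n_1,\dotsc,n_k)$ yields
\[
\mathrm{ch}[W_L](q) = \sum_{n_1,\dotsc,n_k\ge 0}\prod_{i=1}^k \frac{q^{(n_{i-1}+1)n_i}}{(q)_{n_i}} = \sum_{n_1,\dotsc,n_k\ge 0} \frac{q^{\sum_{i=1}^k n_i + \sum_{i=1}^{k-1} n_i n_{i+1}}}{(q)_{n_1}(q)_{n_2}\dotsm (q)_{n_k}},
\]
which is exactly $F(1,\dotsc,1)$.

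The main obstacle is the basis claim itself, as the character calculation is routine once it is in hand. Spanning is essentially a local matter: one repeatedly applies the OPE of $Y(e^{\beta_i},z)$ with $Y(e^{\beta_{i+1}},w)$ to rewrite any ordered monomial in a form meeting the cross-conditions $j^{(i+1)}_s>n_i$. Linear independence is deeper, and is where \cite{MP, P} do the serious work, using intertwining operators and short exact sequences to pin down the graded dimension of $W_L$ and compare it to the span of the proposed basis. My plan is therefore to simply cite those results, after verifying that the inner-product data of $L$ fit their hypotheses, which is straightforward in the even-$k$ case to which the proposition is restricted.
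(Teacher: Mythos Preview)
Your proposal is correct and follows essentially the same approach as the paper: the paper illustrates the $k=2$ case of the monomial basis from \cite{MP,P} and the resulting character computation, then simply cites \cite{MP,P} for the general statement, and your argument is precisely the natural $k$-variable extension of that computation with the same citation for the basis. Your added detail on the cross-conditions and the block-by-block enumeration matches the paper's $k=2$ template exactly.
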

This formula can be interpreted in the language of infinite jet schemes. Consider an affine scheme $X$ and the $m$-th  jet scheme $J_m X $ of $X$. This system has a projective limit in the category of schemes $J_\infty X :=\displaystyle{\lim_{\leftarrow}} \ J_n X$ called the arc space of the infinite jet scheme of $X$.
Consider now (here $k \geq 2$)
$$R:=\mathbb{C}[x_1,...,x_k] /(x_1 x_2, \ldots , x_{k-1}x_k)$$
and let $X={\rm Spec} \ R$. Then the coordinate ring $J_\infty R=\mathbb{C}[J_\infty X]$ has a commutative 
vertex algebra structure (see \cite{Ar,M} for instance). For general vertex algebras we have a surjective morphism 
from $\mathbb{C}[J_\infty X]$ to ${\rm gr}(V)$ but for several examples of ``nilpotent'' vertex algebras,  as well 
as some rational vertex algebras, this map is an isomorphism \cite{Hao} (see also \cite{M, AL,HE}).
For instance, using the presentation of $W_L$ and the definition of $J_\infty R$ it is easy to see using presentation results from \cite{MP,P,Hao1,Hao} that
$$W_L \cong J_\infty R$$
as graded commutative vertex algebras. Therefore the Hilbert series of the arc space of $X$ satisfies
$$HS_q(J_\infty X )={\rm ch}[W_L](q).$$
Our results in Section 7 provide a completely new combinatorial aspect of these infinite jet schemes.

%A simple combinatorial description of the sums can also be given by using $k$-colored integer partitions.
\section{Final comments}
In this section we present a few problems which need to be further addressed. 
First, there is a need for better understanding of the identities in Section 7. We still do not understand 
the nature of the $q$-series appearing on the ``product'' side. Although for $k=3$ and $k=8$ they are essentially modular,  
for $k=5$ and $k=7$ they behave as quantum modular forms, and for $k=2$, $4$, and $6$ 
they are neither modular nor quantum (or false). Further connections with the sum of tails remains unclear to us.
%{\bf AM: could you add a sentence on possible connection with 
%sum of tails or generalizations thereof?}

We would also like to gain a better understanding of the combinatorics 
behind Conjecture \ref{C-character} as there is already rich combinatorics governing monomial bases of  
basic $C_n^{(1)}$-modules \cite{Primc}.

\subsection{Further $q$-series identities}
In another direction, we can slightly modify the quadratic form in $F(r_1,...,r_k)$ by adding the term $n_{k} n_1$ 
so that the summation is over the ``circle''. This way we can produce additional interesting identities.
For instance, for $r_1=\cdots = r_k=1$ and $k=3$, we get an identity for a Ramanujan's 
fifth order mock theta function
\begin{gather*}
\sum_{n_1,n_2,n_3\ge0}
\frac{q^{n_1+n_2+n_3+n_1n_2+n_2n_3+n_3n_1}}{(q)_{n_1}(q)_{n_2}(q)_{n_3}}
= \frac{1}{(q)_\infty} \sum_{n\ge0}\frac{q^n}{(q^{n+1})_{n+1}}.
\end{gather*}
This follows directly from Euler's identity and 
$\sum_{m \geq 0}^\infty q^{m(n+1)} \frac{(q)_{n+m}}{(q)_m}=\frac{(q)_n}{(q^{n+1})_{n+1}}$ 
\cite[Theorem 3.3]{Andrews4}.
For $k=5$, we conjecture an elegant (quasi)-modular identity analogous to the $k=8$ case in Section 7,
\begin{gather*}
\sum_{n_1,n_2,n_3,n_4,n_5\ge0}
\frac{q^{n_1+n_2+n_3+n_4+n_5+n_1n_2+n_2n_3+n_3n_4+n_4n_5+n_5 n_1}}{(q)_{n_1}(q)_{n_2}(q)_{n_3}(q)_{n_4}(q)_{n_5}}
= \frac{q^{-1}}{(q)_\infty^2} \sum_{n\ge1}\frac{nq^n}{1-q^{n}}.
\end{gather*}
Since both expressions on the right-hand side are mock it would be interesting to see whether this persists in

\subsection{Identities with higher order poles}
For $r_1=\cdots = r_k=2$ and $k \geq 3$, we expect
\begin{equation*} 
\frac{\sum_{n \geq 0} (-1)^{n k} q^{\frac{k}{2}n(n+1)}}{(q)^{k}_\infty}
=\sum_{n_1,n_2,\dotsc,n_{k} \geq 0} \frac{q^{\sum_{i=1}^{k-1} n_i n_{i+1}+n_{k} n_1+\sum_{i=1}^{k} n_i}}{(q)_{n_1}^2 (q)_{n_2}^2 \cdots (q)_{n_{k}}^2},
\end{equation*}
again alternating between false identities for $k$ odd, and modular identities for $k$ even (observe,  $\sum_{n \geq 0} q^{ \frac{k}{2} n(n+1)}=\frac12 \sum_{n \in \mathbb{Z}} q^{ \frac{k}{2} n(n+1)}$). Presumably, this can be proven 
by slight adjustments along the lines of \cite[Theorem 5.5]{JM}.

\bibliographystyle{amsplain}

\end{document}